\renewcommand*{\backrefalt}[4]{%
    \ifcase #1 \footnotesize{(Not cited.)}%
    \or        \footnotesize{(Cited on page~#2)}%
    \else      \footnotesize{(Cited on pages~#2)}%
    \fi}
\definecolor{bgcolor}{rgb}{0.8,1,1}
\definecolor{bgcolor2}{rgb}{0.8,1,0.8}
\definecolor{niceblue}{rgb}{0.0,0.19,0.56}
\newcommand{\R}{\mathbb{R}}
\newcommand{\eqdef}{\stackrel{\text{def}}{=}}
\def\<#1,#2>{\left\langle #1,#2\right\rangle}
\newtheorem{lemma}{Lemma}[section]
\newtheorem{theorem}{Theorem}
\newtheorem{example}{Example}[section]
\newtheorem{assumption}{Assumption}
\theoremstyle{plain}
\newcommand{\algname}[1]{{\sf  #1}\xspace}
\newcommand{\argmin}{\mathop{\arg\!\min}}
\renewcommand{\Re}{\mathrm{Re}}
\newcommand{\Sp}{\mathrm{Sp}}
\newcommand{\Tr}{\mathrm{Tr}}
\newcommand{\cO}{{\cal O}}
\newcommand{\cX}{{\cal X}}
\newcommand{\mG}{{\bf G}}
\newcommand{\mM}{{\bf M}}
\newcommand{\mV}{{\bf V}}
\newcommand{\SD}{\mathbb{S}}
\newcommand{\proj}{\mathop{\mathrm{proj}}\nolimits}
\newcommand{\tx}{\widetilde{x}}
\newcommand{\tg}{\widetilde{g}}
\newtheorem*{rep@theorem}{\rep@title}
\newcommand{\newreptheorem}[2]{%
\newenvironment{rep#1}[1]{%
 \def\rep@title{#2 \ref{##1}}%
 \begin{rep@theorem}}%
 {\end{rep@theorem}}}
\newlength{\dhatheight}
\pgfplotsset{
    compat=1.15
    }
\title{Last-Iterate Convergence of Optimistic Gradient Method for Monotone Variational Inequalities}
\author{%
  Eduard Gorbunov\thanks{The work was done when E.~Gorbunov was researcher at MIPT and Mila \& UdeM.} \\
  MIPT, Russia\\
  Mila \& UdeM, Canada\\
  MBZUAI, UAE\\
  {\texttt{eduard.gorbunov@mbzuai.ac.ae}} \\
  \And
  Adrien Taylor \\
  INRIA \& \'Ecole Normale Sup\'erieure,\\
  CNRS \& PSL Research University, France \\
  {\texttt{adrien.taylor@inria.fr}} \\
  \AND
  Gauthier Gidel \\
  Mila \& UdeM, Canada \\
  Canada CIFAR AI Chair \\
  {\texttt{gauthier.gidel@umontreal.ca}}
 }
\begin{document}

\maketitle

\begin{abstract}
  The Past Extragradient (\algname{PEG})~\citep{popov1980modification} method, also known as the Optimistic Gradient method, has known a recent gain in interest in the optimization community with the emergence of variational inequality formulations for machine learning. Recently, in the unconstrained case, \citet{golowich2020tight} proved that a $\cO(\nicefrac{1}{N})$ last-iterate convergence rate in terms of the squared norm of the operator can be achieved for Lipschitz and monotone operators with a Lipschitz Jacobian. In this work, by introducing a novel analysis through potential functions, we show that (i) this $\cO(\nicefrac{1}{N})$ last-iterate convergence can be achieved without any assumption on the Jacobian of the operator, and (ii) it can be extended to the constrained case, which was not derived before even under Lipschitzness of the Jacobian. The proof is significantly different from the one known from \citet{golowich2020tight}, and its discovery was computer-aided. Those results close the open question of the last iterate convergence of \algname{PEG} for monotone variational inequalities.
\end{abstract}

\section{Introduction}
\label{sec:intro}

Minimax optimization, and more generally variational inequality problems, has known a surge of interest with the recent introduction of machine learning formulation with multiple objectives such as robust optimization \citep{ben2009robust}, control \citep{hast2013pid}, generative adversarial networks (GANs) \citep{goodfellow2014generative}, and adversarial training \citep{goodfellow2015explaining, madry2018towards}.
In this work, given a convex set $\mathcal{X}$, we focus on solving monotone variational inequalities:
\begin{equation}
    \text{find } x^* \in \cX \quad \text{such that}\quad \langle F(x^*),x-x^* \rangle \geq  0\,, \quad \forall x \in \mathcal{X} \subseteq \R^d. \label{eq:main_problem_constr} \tag{VIP-C}
\end{equation}
In the unconstrained case ($\mathcal{X} = \mathbb{R}^d$,) this optimality condition can be simplified as 
\begin{equation}
    \text{find } x^* \in \R^d \quad \text{such that}\quad F(x^*) = 0. \label{eq:main_problem_unconstr} \tag{VIP-U}
\end{equation}
We focus on the monotone and Lipschitz setting which is sufficient to ensure the existence of solutions for~\ref{eq:main_problem_constr} and is the standard setting to study first-order methods~\citep{facchinei2003finite}.
\begin{assumption}
\label{assump:mon_lip}
$F:\cX \to \R^d$ is monotone and $L$-Lipschitz, i.e., for all $x,y \in \cX$
\begin{gather}
    \langle F(x) - F(y), x - y \rangle \geq 0 \qquad \text{and} \qquad 
    \|F(x) - F(y)\| \leq L \|x - y\|, \label{eq:Lipschitzness}
\end{gather}
furthermore, there exists some $x^*\in\cX$ which is a solution to~\eqref{eq:main_problem_constr}.
\end{assumption}
In this context, it is well known that the standard Gradient method $x^{k+1} = \proj[x^k - \gamma F(x^{k})]$ (also known as the Forward method), where $\proj(x) := \argmin_{y\in \mathcal{X}}\|y-x\|^2$, does not always converge. Two important first-order methods have been introduced to circumvent this issue: the extragradient method (\algname{EG})~\citep{korpelevich1976extragradient}
\begin{equation}
    \tx^k = \proj[x^k - \gamma F(x^k)],\quad x^{k+1} = \proj[x^k - \gamma F(\tx^k)],\quad \text{for all} \; k>0\, , \tag{\algname{Proj-EG}} \label{eq:EG_method}
\end{equation}
and the past extragradient method (\algname{PEG})~\citep{popov1980modification} defined via the following recursions: $\tx^0 = x^0 \in \cX$, $x^1 = \proj[x^0 - \gamma F(x^0)]$, and
\begin{equation}
    \tx^k = \proj[x^k - \gamma F(\tx^{k-1})],\quad x^{k+1} = \proj[x^k - \gamma F(\tx^k)] \tag{\algname{Proj-PEG}}\,,\quad \text{for all} \; k>0\,.\label{eq:Proj_PEG}
\end{equation}
In the unconstrained case, the method simplifies to $x^1 = x^0 - \gamma F(x^0)$, and for all $k \geq 0$
\begin{equation}
    \tx^k = x^k - \gamma F(\tx^{k-1}),\quad x^{k+1} = x^k - \gamma F(\tx^k)\,. \tag{\algname{PEG}}\label{eq:PEG}
\end{equation}
with the convention that $F(\tx^{-1}) = 0$, allowing us to use the above recursions for $k = 0$. Note that, in this case, \ref{eq:PEG} is often studied in the equivalent form called Optimistic Gradient Method (\algname{OG}):\footnote{See~\citep{hsieh2019convergence} for an overview of the different single-call variants of the extragradient method.}
\begin{equation}
    \tx^{k+1} = \tx^k - 2\gamma F(\tx^k) + \gamma F(\tx^{k-1})\,. \tag{\algname{OG}}\label{eq:OG}
\end{equation}

Under Assumption~\ref{assump:mon_lip}, some recent works have managed to leverage tools for computer-aided proofs to show a $\cO(\nicefrac{1}{N})$ (for the squared norm of the operator/squared residual\footnote{By default, we always refer to the rates of convergence for $\|F(x^N)\|^2$ in the unconstrained case and $\|x^N - x^{N-1}\|^2$ in the constrained case.}) last-iterate convergence rate for the extragradient method (EG), where $N$ denotes the total number of iterations. This was achieved in the unconstrained case by~\citep{gorbunov2021extragradient} via the performance estimation technique~\citep{drori2012performance,taylor2017smooth} and in the constrained case~\citep{cai2022tight} via Sum-of-Squares (SOS) techniques~\citep{shor1987approach,nesterov2000squared,parrilo2000structured,lasserre2001global} (note that ``performance estimation'' corresponds to SOS of order $2$). Our work leverages performance estimation problems (PEPs) for obtaining similar convergence results for~\ref{eq:PEG} and~\ref{eq:Proj_PEG}.

\paragraph{Outline.} This paper is composed of five sections. In \S\ref{sec:intro} we introduce our motivations, our main results and discuss the related work. In \S\ref{sec:path}, we show how we used PEP to hint us toward a valid potential function. We prove the convergence of~\ref{eq:PEG} in the \emph{unconstrained case} and the \emph{constrained} cases respectively in \S\ref{sec:proof} and \S\ref{sec:proof_const}. Finally, we discuss our results and future research directions in \S\ref{sec:discussion}. Since the proof in the unconstrained case is slightly simpler than in the constrained one (although it cannot be straightforwardly deduced from our analysis in the constrained case), in the main part of the paper, we discuss in detail the way to the proof in the unconstrained case, and defer the proofs and other details on the constrained case to Appendices~\ref{app:thm_cons} and \ref{app:extra_exp}. Codes for verifying the potentials and convergence rates are publicly available: \url{https://github.com/eduardgorbunov/potentials_and_last_iter_convergence_for_VIPs}, the codes rely on the PEP packages~\citep{taylor2017performance,goujaud2022pepit} as well as on YALMIP~\citep{lofberg2004yalmip}.

\subsection{Presentation of Our Main Results}

For variational inequalities with possibly \emph{unbounded} domains, there exist two main standard convergence criteria in the literature. The first one is the restricted gap function~\citep{nesterov2007dual}
\begin{equation}
     \texttt{Gap}_{F,R}(x^k) = \max\limits_{y\in\mathcal{X}: \|y - x^*\| \leq R}\langle F(y), x^k - y \rangle, \label{eq:gap_function_def}
\end{equation}
where $x^*$ is any solution\footnote{For simplicity, we slightly deviate from the standard notion of the restricted gap function from \citet{nesterov2007dual}, since we do not assume that set $\{y \in \cX\mid \|y - x^*\| \leq R\}$ contains all solutions of \ref{eq:main_problem_constr}. Taking $R$ larger than the diameter of the solution set resolves the discrepancy between definitions.} of \ref{eq:main_problem_constr}.
This quantity is an actual gap function only if $\|x^k - x^*\|\leq R$ and that it cannot be extended to the non-monotone setting where, for instance, there might exist several points where $F(x) =0$. Since we show that $\|x^k-x^*\| \leq \tfrac{\sqrt{41}}{3}\|x^0 - x^*\|$ and $\|x^k-x^*\| \leq \sqrt{2}\|x^0 - x^*\| + \tfrac{\sqrt{2}}{L}\|F(x^0)\|,\,\forall k\geq 0$ in the unconstrained and constrained cases respectively, we can set $R=\tfrac{\sqrt{41}}{3}\|x^0 - x^*\|$ in the unconstrained case and $R= \sqrt{3}\|x^0 - x^*\| + \tfrac{1}{\sqrt{30}L}\|F(x^0)\|,$ in the constrained case and have that $\texttt{Gap}_{F}(x^k) :=  \texttt{Gap}_{F,R}(x^k)$ is a gap function for all $k\geq 0$. 
Another convergence criterion is the (squared) norm of the residual $\|x^{k+1}-x^k\|^2$. In the unconstrained setting, it is proportional to the (squared) norm of the operator $\|F(\tx^k)\|^2$. This criterion is also valid as a local convergence certificate in the non-monotone case. We believe this criterion depicts a more precise picture than the standard gap function since it does not require to use a bound on $\|x^k-x^*\|$ to be valid and it generalizes to non-monotone settings. Nevertheless, we provide convergence results in terms of both criteria.

Our main theorems introduce new potential/Lyapunov functions for~\ref{eq:PEG} with two main consequences: (i) it implies a uniform bound on $\|x^N-x^*\|$, and (ii) we show a $\cO(\nicefrac{1}{\sqrt{N}})$ convergence rate for $\|F(x^N)\|$ and $\texttt{Gap}_{F}(x^N)$ in the unconstrained case and for $\|x^N - x^{N-1}\|$ and $\texttt{Gap}_{F}(x^N)$ in the constrained case\footnote{We notice that $\|x^N - x^{N-1}\| = \gamma \|F(\tx^{N-1})\|$ in the unconstrained case, which differs from the quantity $\gamma\|F(x^N)\|$ that we estimate. However, $\|F(\tx^k)\|$ and  $\|F(x^k)\|$ are of comparable size since  $\|F(\tx^k)-F(x^k)\|\leq \gamma L \|F(\tx^{k-1})\|$ for all $k\geq 0$.}. Theorem~\ref{thm:unconstr} and Theorem~\ref{thm:const} below provide simplified versions of the results; more detailed/general statements are presented later in \S\ref{sec:proof} and
\S\ref{sec:proof_const}.

\begin{reptheorem}{thm:unconstr}[Unconstrained Case] Under Assumption~\ref{assump:mon_lip},
for all $N \geq 0$ and $\gamma = \nicefrac{1}{3L}$, we have
     \begin{equation}
        \|F(x^N)\|^2 \leq \frac{123 L^2\|x^0 - x^*\|^2}{N+32},\quad \texttt{Gap}_{F}(x^N) \leq \frac{125L\|x^0 - x^*\|^2}{\sqrt{3N+96}}, \label{eq:main_result_copy}
    \end{equation}
where $x^*$ is any solution to~\ref{eq:main_problem_unconstr}.
\end{reptheorem}
\begin{reptheorem}{thm:const}[Constrained Case]
   Under Assumption~\ref{assump:mon_lip}, for all $N \geq 2$ the iterates of \ref{eq:Proj_PEG} with $\gamma = \nicefrac{1}{4L}$ satisfy 
    \begin{equation}
        \|x^{N} - x^{N-1}\|^2 \leq \frac{24H_{0}^2}{3N+32},\quad \texttt{Gap}_F(x^N) \leq \frac{32L\sqrt{3}H_{0}^2}{\sqrt{3N+32}}, \label{eq:main_result_constrained_copy}
    \end{equation}
    where $H_0 > 0$ is such that $H_{0}^2 = 3\|x^0 - x^*\|^2 + \tfrac{1}{30L^2}\|F(x^0)\|^2$ and $x^*$ is any solution to~\ref{eq:main_problem_constr}.
\end{reptheorem}

\subsection{Related Work}

\paragraph{Linear last-iterate convergence rates.}
Motivated by nonconvex-nonconcave minimax formulation such as GANs, last-iterate convergences in the context of variational inequalities is the focus of many recent works. Linear convergence rates have been obtained, in the bilinear setting, the (local) strongly monotone setting, and similar settings such as sufficient bilinearity~\citep{tseng1995linear,daskalakis2018training,liang2019interaction,gidel2019variational,gidel2019negative,mokhtari2019proximal,peng2020training,zhang2019convergence,abernethy2019last,loizou2020stochastic,hsieh2019convergence,azizian2020tight,azizian2020accelerating}.

\paragraph{Sublinear last-iterate convergence rates for \algname{EG} and \algname{PEG}.}
More recently, the community has been focusing on the question of last-iterate convergence rate in the \emph{monotone} setting (i.e., without strong monotonicity or sufficient bilinearity).
For monotone and Lipschtiz operators obtaining $\cO(\nicefrac{1}{N})$ last-iterate convergence rate of \ref{eq:PEG} was explicitly stated as an open question in~\citep{hsieh2019convergence}. In the unconstrained case, \citet{golowich2020tight} achieve this result by adding an assumption on the Jacobian of $F$ being Lipschitz. For \algname{EG} as similar result has been obtained in~\citet{golowich2020last}. Eventually, a last-iterate convergence rate for \algname{EG} has been provided by~\citet{gorbunov2021extragradient} in the unconstrained case and by~\citet{cai2022tight}\footnote{
The paper \citep{cai2022tight} originally contained a $\cO(\nicefrac{1}{N})$ analysis of the Extragradient method for Lipschitz monotone variational inequalities. In the updated version of their work, \citet{cai2022tight} obtained $\cO(\nicefrac{1}{N})$ convergence rates for \algname{OG} using higher-order sum-of-squares. The results in our work were obtained independently and using a different approach. On the way, this work showcases that it is not necessary to use higher-order sum-of-squares when working with standard residual (i.e., using quadratic inequalities suffices).} in the constrained case, both under Assumption~\ref{assump:mon_lip} solely.
The question of last-iterate convergence rate for \ref{eq:PEG} both in the unconstrained and constrained cases mentioned by~\citet{hsieh2019convergence} remained open until now and is the central question addressed here.

\paragraph{Variants of \algname{EG} and \algname{PEG}.}
 Recently, some modification of \algname{EG} with anchoring (a.k.a., Halpern iteration~\citep{halpern1967fixed}) enjoying (accelerated) last-iterate convergence rates have been proposed by~\citep{yoon2021accelerated}, \citep{lee2021fast}, and~\citep{diakonikolas2020halpern}. This work is concerned with method achieving the suboptimal $\cO(\nicefrac{1}{N})$ rate, whereas $\cO(\nicefrac{1}{N^2})$ can be achieved using optimal methods~\citep{diakonikolas2020halpern,yoon2021accelerated,lee2021fast,tran2021halpern,tran2022connection}. We argue that (i)~\ref{eq:PEG} is still largely used in practice, (ii)~\ref{eq:PEG} is simple and more flexible, and (iii) that it benefits from additional advantageous properties, such as adaptivity to additional problem structure. Regarding (i) we would like to mention that \citet{daskalakis2018training} show that \ref{eq:PEG}-based algorithms (like \algname{PEG-Adam}) perform well in training WGAN on CIFAR10 and \ref{eq:PEG}/\ref{eq:OG} have been extensively used in regret matching \citep{brown2019solving}, counterfactual regret minimization \citep{farina2019stable}, and for training agents to play poker \citep{anagnostides2022last}. With a bit more details about (ii) and (iii):~\ref{eq:PEG} is highly flexible and can be applied to online learning~\citep{golowich2020tight} or to the non-monotone variational inequalities~\citep{daskalakis2018training}. In contrast, \algname{EG} with anchoring (\algname{EAG}) \citep{yoon2021accelerated}
 \begin{equation}
     \tx^{k} = x^k + \beta_k\left(x^0 - x^k\right) - \gamma F(x^k),\quad x^{k+1} = x^k + \beta_k\left(x^0 - x^k\right) - \gamma F(x^k), \tag{\algname{EAG}} \label{eq:EAG}
 \end{equation}
where $\beta_k \in [0,1)$ is the anchoring coefficient, cannot be applied to online learning easily (\algname{EG}/\algname{EAG} are not no-regret~\citep{golowich2020tight}) and may not be desirable in the non-monotone setting since it may make some ``bad'' stationary points attractive.\footnote{For example, stationary points $x^*$ such that $\Re(\lambda) \gtrsim -\beta\,,\; \forall \lambda \in \Sp(\nabla F(x^*))$ become locally attractive as long as the anchoring coefficient $\beta_k$ verifies $\beta_k \geq \beta$, which may correspond to an arbitrary long amount of time (see Example~\ref{ex:example_1}.) }

\begin{example} Let us consider a single example classification task with a deep linear neural network $\min_{w \in \mathbb{R}^3} (y-w_3w_2 w_1x)^2:= f(w)$. It has a undesirable stationary point $w^s = (0,0,0)$. Because $\nabla^2 f(w^s) = 0$, for an initialization $w^0$ close enough to $w^s$ and any small enough stepsize, \algname{EAG} will converge to $w^s$ while \ref{eq:PEG} and~\algname{EG} will not, except for a zero measure set of initializations.
\label{ex:example_1}
\end{example}
A further practical reason that renders ``simple'' methods (such as~\ref{eq:PEG} and \algname{EG}) attractive is that simple methods are typically adaptive to additional problem structure (better behavior than predicted by the analysis when the problem has beneficial additional properties). As an example,~\ref{eq:PEG} converges sublinearly for monotone operator (this is the topic of this work) and linearly for strongly-monotone operators~\citep{gidel2019variational} under the same stepsize rules. This stands in sharp contrast with optimal methods, which require to be tuned to the specific setting at hand (and in particular, which require the knowledge of the setting at hand).

\section{A Path to the Proof}
\label{sec:path}

In this section, we show a direct approach for assessing the worst-case convergence rate of the last iterate of~\ref{eq:PEG}. The Lyapunov analyses provided in the next sections are grounded on the numerical results presented here. Whereas converting those numerical results into a Lyapunov analysis is not direct, we believe that the material offers the comfortable privilege of a first clear $\cO(\nicefrac{1}{N})$ baseline for the further convergence results, as well as a convenient approach for verifying Lyapunov functions.

\paragraph{Verifying $\cO(\nicefrac{1}{N})$ last-iterate convergence rate.} This section contains our heuristic argument for concluding a $\cO(\nicefrac{1}{N})$ convergence of $\|F(x^N)\|^2$ for all $F$ satisfying our assumptions. This ingredient was the main motivation behind the investigations of the next sections. In short, our goal is to characterize the worst-case behavior of $\frac{\|F(x^N)\|^2}{\|x^0-x^*\|^2}$ as a function of $N$ when $x^N$ is obtained from~\ref{eq:PEG}. For doing that, we rely on the so-called performance estimation framework---first introduced in~\citep{drori2012performance}. That is, we consider the problem of computing the worst-case value of $\frac{\|F(x^N)\|^2}{\|x^0-x^*\|^2}$ (i.e., the worst possible $L$-Lipschitz and monotone $F$, worst dimension $d\in\mathbb{N}$, worst sequence of iterates $x^0,\ldots,x^N,\tilde{x}^{0},\ldots,\tilde{x}^{N-1}\in\mathbb{R}^d$ and worst solution $x^*\in\mathbb{R}^d$ to~\eqref{eq:main_problem_unconstr}):
\begin{eqnarray}
    G_{\algname{PEG}}(\gamma, L, N) = &\max\limits_{\substack{F,d,x^*\\\tilde{x}^{0},\ldots,\tilde{x}^{N}\\x^0,\ldots,x^N}}& \frac{\|F(x^N)\|^2}{\|x^0 - x^*\|^2} \label{eq:PEP_for_squared_norm}\\
    &\text{s.t.}& F \text{ is monotone and $L$-Lipschitz,}\notag\\
    &&\tilde{x}^{0}=x^0 \in \R^d, x^1 = x^0 - \gamma F(x^0)\notag\\ 
    && \tx^k = x^k - \gamma F(\tx^{k-1}), \text{ for } k  = 1,\ldots,N,\notag\\
    && x^{k+1} = x^k - \gamma F(\tx^k),\text{ for } k  = 1,\ldots,N-1.\notag
\end{eqnarray}
Such problems are often referred to as Performance Estimation Problems (PEPs). Whereas it is not clear how to solve this PEP~\eqref{eq:PEP_for_squared_norm}, a few techniques from~\citep{ryu2020operator,2017taylor} allow obtaining a semidefinite relaxation providing meaningful worst-case bounds. By solving those problems numerically, we are able to \emph{conjecture} that $G_{\algname{PEG}}(\gamma, L, N) = \cO(\nicefrac{1}{N})$, as the convex relaxations provides upper bounds on $G_{\algname{PEG}}$ which appear to behave in $\cO(\nicefrac{1}{N})$ in numerical experiments. More precisely, the convex relaxation under consideration arises from a sampled version of the previous problem (thereby passing from an infinite-dimensional problem to a finite-dimensional one). In other words, we consider a sampled version of $F$ with $g^k\approx F(x^k)$ and $\tg^k\approx F(\tx^k)$ (so the variables will be the iterates and the operators values at the iterates instead of the operator itself) and we require monotonicity and Lipschitzness to be satisfied at those points. For convenience, we define the set of samples $S=\{(x^*,0)\}\cup\{(x^k, g^k)\}_{k=0}^N\cup\{(\tx^k, \tg^k)\}_{k=0}^N\subseteq \R^d\times \R^d$:
\begin{eqnarray}
    \widetilde{G}_{\algname{PEG}}(\gamma, L, N) = &\max\limits_{\substack{d\in\mathbb{N},x^*\in\R^d\\\{(x^k,g^k)\}_{k=0}^N\subset \R^d\times\R^d \\\{(\tx^k,\tg^k)\}_{k=0}^N \subseteq \R^d\times \R^d}}& \|g^N\|^2 \label{eq:PEP_for_squared_norm_1}\\
    &\text{s.t.}& \langle g - h, x - y\rangle \geq 0 \quad \forall (x,g), (y,h)\in S\label{eq:constraints_m1}\\ &&\|g - h\|^2 \leq L^2\|x - y\|^2 \quad \forall (x,g), (y,h)\in S\label{eq:constraints_1}\\
    &&\tx^{0}=x^0 \in \R^d, x^1 = x^0 - \gamma g^0\notag\\ 
    && \tx^k = x^k - \gamma \tg^{k-1}, \text{ for } k  = 1,\ldots,N,\notag\\
    && x^{k+1} = x^k - \gamma \tg^k,\text{ for } k  = 1,\ldots,N-1,\notag\\
    && \|x^0 - x^*\|^2 \leq 1. \label{eq:constraints_2}
\end{eqnarray}
(Note that a classical homogeneity argument allows replacing the objective of~\eqref{eq:PEP_for_squared_norm} by $\|g^N\|^2$ plus the constraint $\|x^0 - x^*\|^2 \leq 1$; see, e.g.,~\citep[\S 3.1.1.]{ryu2020operator}.)
In other words, we replace the constraint corresponding to the existence of monotone $L$-Lipschitz operator $F$ from \eqref{eq:PEP_for_squared_norm} by the constraint that there exist sequences of points $\{x^k\}_{k=0}^N, \{\tx^k\}_{k=0}^N$ and $\{g^k\}_{k=0}^N, \{\tg^k\}_{k=0}^N$ such that they satisfy \emph{necessary} conditions for the existence of monotone and $L$-Lipschitz operator $F$ interpolating them. Unfortunately, these constraints are not sufficient to ensure that there exists such monotone $L$-Lipschitz operator $F$ \citep{ryu2020operator}. That is, we can guarantee only that $\widetilde{G}_{\algname{PEG}}(\gamma, L, N) \geq G_{\algname{PEG}}(\gamma, L, N)$. Finallly, $\widetilde{G}_{\algname{PEG}}(\gamma, L, N)$ can be computed numerically using semidefinite programming (SDP) through standard solvers~\citep{Mosek,Sedumi}. For formulating the computation of $\widetilde{G}_{\algname{PEG}}(\gamma, L, N)$ as an SDP we first substitute a number of variables: $\{x^k\}_{k=1}^N$ and $\{\tx^k\}_{k=0}^N$ are linear combinations of $x^0, \{g^k\}_{k=0}^N, , \{\tg^k\}_{k=1}^N$. Next, we notice that the objective and constraints of problem \eqref{eq:PEP_for_squared_norm_1} are linear functions of all possible inner products of vectors from $\mV \eqdef (x^*, x^0, g^0, \tg^1, g^1, \tg^2, g^2,\ldots, \tg^N, g^N)$. That is, problem \eqref{eq:PEP_for_squared_norm_1} is linear w.r.t.\ the elements of Gram matrix $\mG = \mV^\top \mV\succeq 0$ (we use the notation $\mG\in\SD_{+}^{2N+3}$ for denoting $(2N+3)\times (2N+3)$ symmetric positive semidefinite matrices). The problem also naturally features the constraint $\mathrm{rank}(\mG)\leq 2N+3$, which becomes void due to maximization over the dimension $d$ in \eqref{eq:PEP_for_squared_norm_1} (see, e.g.,~\citep[Theorem 5]{taylor2017smooth}) and the $\widetilde{G}_{\algname{PEG}}(\gamma, L, N)$ can therefore be computed by solving a standard SDP:
\begin{eqnarray}
    \widetilde{G}_{\algname{PEG}}(\gamma, L, N) = &\max\limits_{\mG \in \SD_{+}^{2N+3}}& \Tr(\mM_0 \mG) \label{eq:PEP_for_squared_norm_2}\\
    &\text{s.t.}&  \Tr(\mM_i \mG) \leq 0 \text{ for } i = 1, 2,\ldots, 2N(2N+1) + 1, \notag\\
    && \Tr(\mM_{-1} \mG) \leq 1,\notag
\end{eqnarray}
where $\{\mM_i\}_{i=-1}^{2N(2N+1)-1}$ are symmetric matrices encoding the objective and constraints from \eqref{eq:PEP_for_squared_norm_1}--\eqref{eq:constraints_2}. For compactness, we omit the exact formulas for these matrices and refer to the examples for different PEPs from, e.g.,~\citep{ryu2020operator, gorbunov2021extragradient}.

By solving \eqref{eq:PEP_for_squared_norm_2} numerically, we empirically observe that $\widetilde{G}_{\algname{PEG}}(\gamma, L, N) = \cO(\nicefrac{1}{N})$ for different choices of $\gamma$, see Fig.~\ref{fig:1a}. Taking into account the lower bound $G_{\algname{PEG}}(\gamma, L, N) = \Omega(\nicefrac{1}{N})$ from \citet{golowich2020tight} we conclude that it is very likely that $\|F(x^N)\|^2 \sim \nicefrac{1}{N}$ for the values of $\gamma$ and $N$ under consideration. Of course, this observation is not a rigorous mathematical proof for the $\cO(\nicefrac{1}{N})$ last-iterate convergence of \ref{eq:PEG} for two main reasons: (i) the SDP solver only outputs approximate SDP certificates (though highly accurate), and (ii) even by using exact SDP solvers (see, e.g.,~\cite{henrion2016exact}), the worst-case bounds would only be valid for the values of the parameters that were tried numerically; in particular, we only solved the SDPs for a few values of $N$. To translate those into rigorous worst-case bounds that are valid beyond the numerical trials, our goal is to identify feasible solution to the dual problem to~\eqref{eq:PEP_for_squared_norm_2} (each of those dual solutions corresponds to a valid upper bound on  $\widetilde{G}_{\algname{PEG}}(\gamma, L, N)$, and thereby also on ${G}_{\algname{PEG}}(\gamma, L, N)$). One can find examples of such dual certificates in, e.g.,~\citep{de2017worst,2017taylor}.

\begin{figure*}[t]
\centering
\begin{subfigure}[b]{0.32\linewidth}
    \centering
    \includegraphics[width=\textwidth]{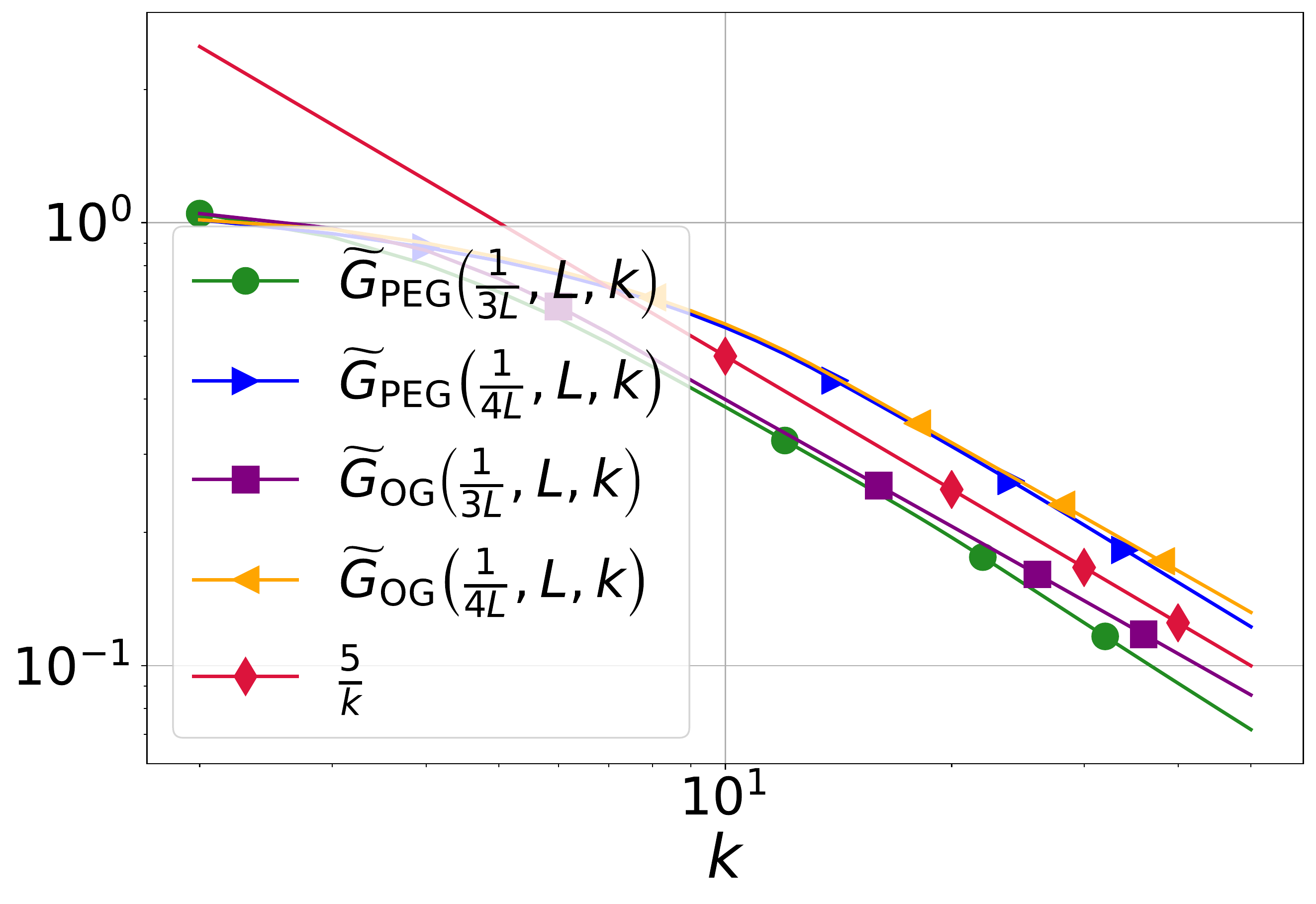}
    \caption{PEPs $\widetilde{G}_{\algname{PEG}}$ and $\widetilde{G}_{\algname{OG}}$.}\label{fig:1a}
\end{subfigure}
\begin{subfigure}[b]{0.32\linewidth}
    \centering
    \includegraphics[width=\textwidth]{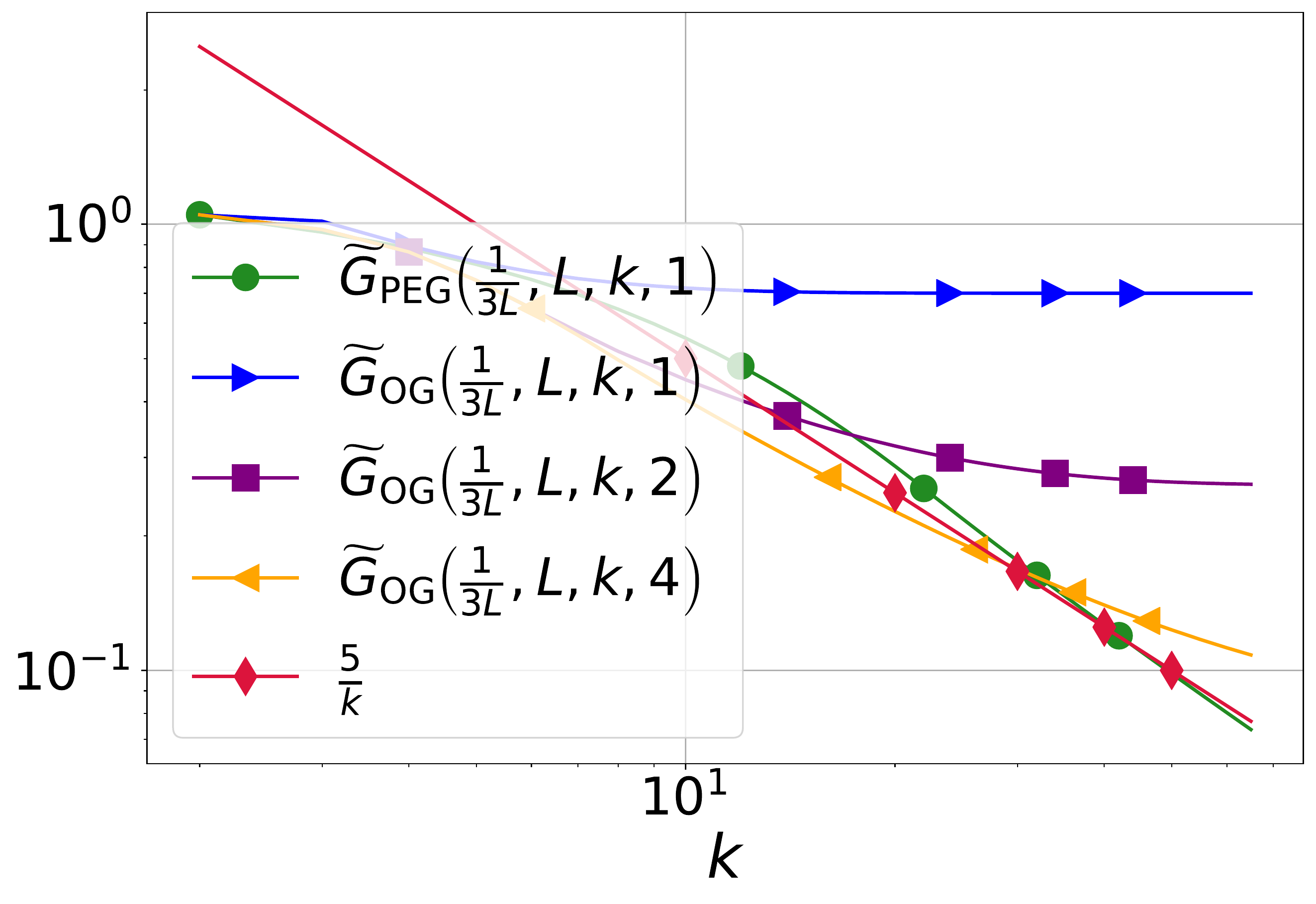}
    \caption{PEPs with distance-$t$ constraints}\label{fig:1b}
\end{subfigure}
\begin{subfigure}[b]{0.32\linewidth}
    \centering
    \includegraphics[width=\textwidth]{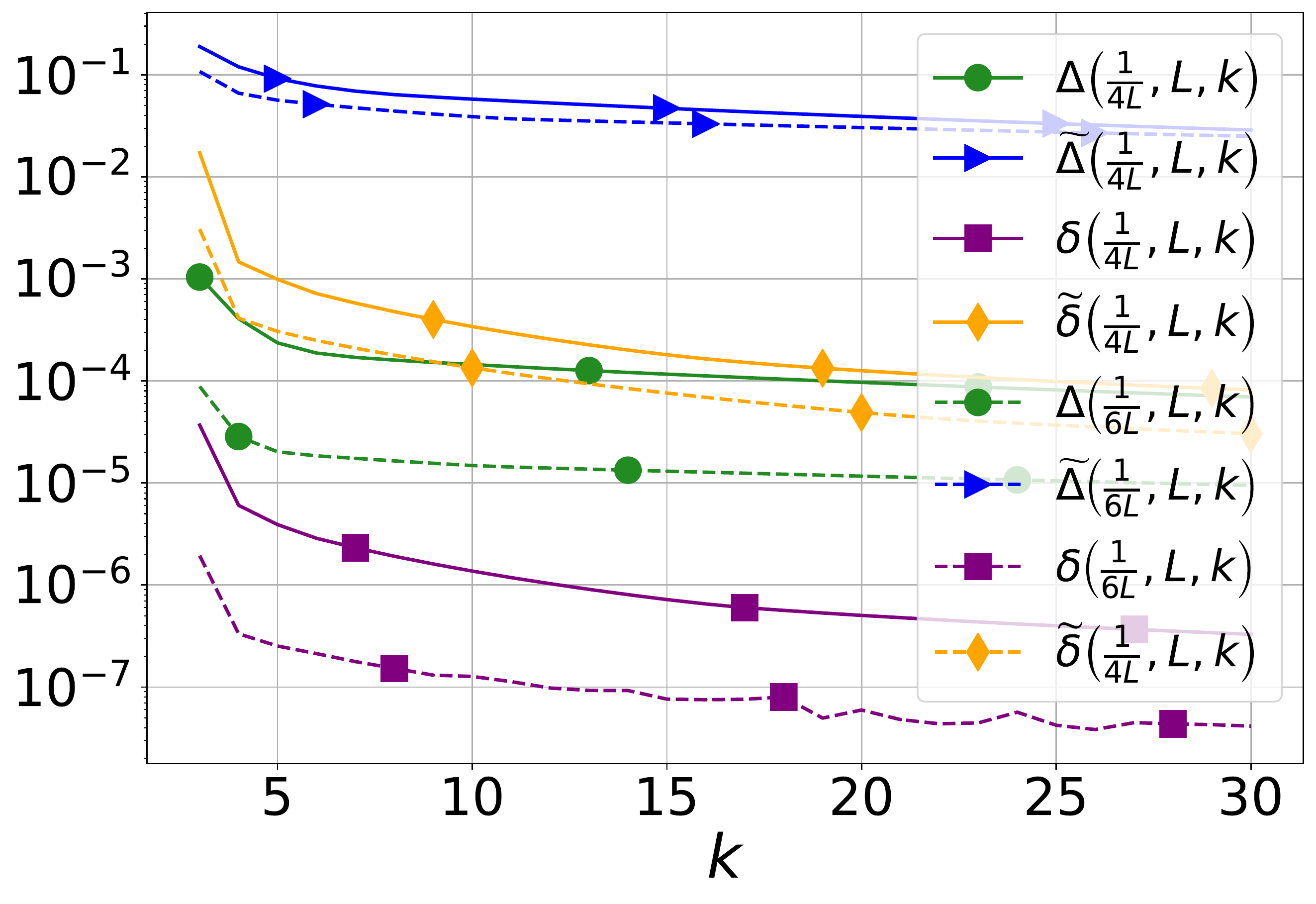}
    \caption{Sq.\ norm differences}\label{fig:1c}
\end{subfigure}
\caption{In~(a), we report $\widetilde{G}_{\algname{PEG}}(\gamma, L, N)$ and $\widetilde{G}_{\algname{OG}}(\gamma, L, N)$ for different values of $\gamma$ and $N$. In both cases, we observe $\cO(\nicefrac{1}{N})$ convergence. In~(b), we report $\widetilde{G}_{\algname{PEG}}(\gamma, L, N, 1)$ and $\widetilde{G}_{\algname{OG}}(\gamma, L, N, t)$ for $t=1,2,4$. It suggest that $\widetilde{G}_{\algname{PEG}}(\gamma, L, N, 1)\sim \nicefrac{1}{N}$ but not $\widetilde{G}_{\algname{OG}}(\gamma, L, N, t)$ (even for $t = 4$). Finally,~(c) shows how $\Delta(\gamma, L, N), \widetilde{\Delta}(\gamma, L, N), \delta(\gamma, L, N), \widetilde{\delta}(\gamma, L, N)$ evolve as $N$ grows.}
\label{fig:1}
\end{figure*}

\paragraph{Why do we use \algname{PEG} form instead of \algname{OG} form?} It is relatively simple to verify that \ref{eq:PEG} and \ref{eq:OG} are equivalent~\citep{gidel2019variational,hsieh2019convergence}. Moreover, \ref{eq:OG} can be seen as a simplified version of \ref{eq:PEG} since \ref{eq:OG} explicitly uses only one sequence of points, while \ref{eq:PEG} relies on two sequences. As for \ref{eq:PEG}, one can consider a PEP for~\ref{eq:OG}:
\begin{eqnarray}
    G_{\algname{OG}}(\gamma, L, N) = &\max\limits_{\substack{F,d,x^*\\\tilde{x}^{0},\ldots,\tilde{x}^{N}}}& \frac{\|F(\tx^N)\|^2}{\|\tx^0 - x^*\|^2} \label{eq:PEP_for_squared_norm_OG}\\
    &\text{s.t.}&  F \text{ is monotone and $L$-Lipschitz,}\notag\\
    && \tx^0 \in \R^d,\;\tx^1 = \tx^0 - \gamma F(\tx^0),\notag\\
    && \tx^{k+1} = \tx^k - 2\gamma F(\tx^k) + \gamma F(\tx^{k-1}), \text{ for } k  = 1,\ldots,N-1,\notag
\end{eqnarray}
as well as its sampled version and the corresponding SDP relaxation, denoted by $\widetilde{G}_{\algname{OG}}(\gamma, L, N)$. Because its sampled version naturally contains only samples of $F$ at the iterates $\{\tx^k\}_{k=0}^N$, the corresponding SDP has about $4$ times less constraints than \eqref{eq:PEP_for_squared_norm_2} (recall that~\eqref{eq:PEP_for_squared_norm_2} also involves Lipschitz and monotonicity inequalities with the iterates $\{x^k\}_{k=0}^N$). 
As for $\widetilde{G}_{\algname{PEG}}(\gamma, L, N)$, one can also compute $\widetilde{G}_{\algname{OG}}(\gamma, L, N)$ numerically using SDP solvers, see Fig.~\ref{fig:1a}. Those numerical results also suggest that $\widetilde{G}_{\algname{OG}}(\gamma, L, N)=\cO(\nicefrac{1}{N})$ for the different choices of $\gamma$ provided in Fig.~\ref{fig:1a}, and $\widetilde{G}_{\algname{PEG}}(\gamma, L, N)$ and $\widetilde{G}_{\algname{OG}}(\gamma, L, N)$ are close to each other for the set of parameters under consideration. Finally, due to the fact that the SDP formulation of $\widetilde{G}_{\algname{OG}}(\gamma, L, N)$ involves less constraints than that of $\widetilde{G}_{\algname{PEG}}(\gamma, L, N)$ while providing very similar results, one might expect that studying $\widetilde{G}_{\algname{OG}}(\gamma, L, N)$ might be simpler.

Convergence proofs of classical first-order optimization methods generally rely on clever combinations of inequalities characterizing the class of problems at hand, and the algorithm. Those inequalities typically involve consecutive iterates and/or a solution $x^*$ to the variational problem. For finding a (hopefully) simple proof, let us introduce a few relaxations of $\widetilde{G}_{\algname{PEG}}(\gamma, L, N)$ and $\widetilde{G}_{\algname{OG}}(\gamma, L, N)$, parameterized by some $t\in\mathbb{N}$ and respecitvely denoted by  $\widetilde{G}_{\algname{PEG}}(\gamma, L, N,t)$ and $\widetilde{G}_{\algname{OG}}(\gamma, L, N,t)$. Those values respectively correspond to the optimal values of the respective SDP formulations of $\widetilde{G}_{\algname{PEG}}$ and $\widetilde{G}_{\algname{OG}}$ where we  removed all constraints corresponding to pairs of iterates $(i,j)$ with $|i - j| > t$. We refer to the remaining constraints as \emph{distance-$t$ constraints}. For example, distance-$1$ constraints involve monotonicity and Lipschitzness inequalities between two consecutive iterates as well as between the iterates and the solution $x^*$ under consideration. As provided by Fig.~\ref{fig:1b}, solving the corresponding SDPs for \ref{eq:PEG} and \ref{eq:OG} numerically, we observe that $\widetilde{G}_{\algname{PEG}}(\gamma, L, N, 1) \sim \nicefrac{1}{N}$, but the value of $\widetilde{G}_{\algname{OG}}(\gamma, L, N, 1)$ seem to stall after a few iterations. This experiment suggests necessity of using the values of $F$ evaluated at $\{x^k\}_{k\geq 0}$ for obtaining ``simple'' proofs involving only inequalities with consecutive iterates.

\paragraph{The norm does not decrease monotonically.} Previous numerical experiments suggest that there exist a proof of $\cO(\nicefrac{1}{N})$ last-iterate convergence of \ref{eq:PEG} which uses only inequalities involving consecutive iterates and $x^*$. However, the problem of finding the proof remains somehow involved and we did not manage to find analytical expressions (as functions of $\gamma$, $L$, and $N$) to the dual SDP formulation to $\widetilde{G}_{\algname{PEG}}(\gamma, L, N, 1)$. Therefore, the following lines aim at finding appropriate Lyapunov function, that is, even looser upper bounds on ${G}_{\algname{PEG}}(\gamma, L, N)$. As a starting point, it is shown in~\citep{gorbunov2021extragradient} that the extragradient method (\algname{EG}) satisfies $\|F(x^{k+1})\|^2 \leq \|F(x^k)\|^2$ for all $k \geq 0$ and all monotone Lipschitz operator $F$. Since \algname{EG} and \ref{eq:PEG} are very similar methods, it is natural to check whether the same inequality holds for \ref{eq:PEG}. One way to verify whether this inequality holds is to check nonpositivity of the following PEP:
\begin{eqnarray}
    &\max\limits_{\substack{F,d,x^*\\\tilde{x}^{0},\ldots,\tilde{x}^{N}\\x^0,\ldots,x^N}}& \frac{\|F(x^{N+1})\|^2 - \|F(x^{N})\|^2}{\|x^0 - x^*\|^2} \label{eq:PEP_for_Delta_squared_norm}\\
    &\text{s.t.}& F \text{ is monotone and $L$-Lipschitz,}\notag\\
    &&  \tx^0 = x^0\in \R^d,\; x^1 = x^0 - \gamma F(x^0),\notag\\
    && \tx^k = x^k - \gamma F(\tx^{k-1}), \text{ for } k  = 1,\ldots,N,\notag\\
    && x^{k+1} = x^k - \gamma F(\tx^k),\text{ for } k  = 1,\ldots,N-1,\notag
\end{eqnarray}
through its SDP relaxation, whose value is denoted by $\Delta(\gamma, L, N)$. A similar SDP can be formulated for the objective $\|F(\tx^N)\|^2 - \|F(\tx^{N-1})\|^2$ and we denote its corresponding optimal value by $\widetilde\Delta(\gamma, L, N)$. For convenience, we also define two corresponding SDP formulation whose optimal values are denoted by $\delta(\gamma, L, N)$ and $\widetilde\delta(\gamma, L, N)$ and which corresponds to respectively the SDP formulations of $\Delta(\gamma, L, N)$ and $\widetilde\Delta(\gamma, L, N)$ where the Lipschitz and monotonicity constraints are replaced by cocoercivity (i.e.,
$\langle g - h, x - y\rangle \geq 0$ and $\|g - h\|^2 \leq L^2\|x - y\|^2$ are replaced by $\|g-h\|^2 \leq L\langle g-h, x-y \rangle$). Cocoercive operators are both Lipschitz and monotone, and one advantageous property of $\delta(\gamma, L, N)$ and $\widetilde\delta(\gamma, L, N)$ is that they are guaranteed to be ``tight''. That is, one can construct operators matching the numerical values of respectively $\frac{\|F(x^{N+1})\|^2 - \|F(x^{N})\|^2}{\|x^0 - x^*\|^2}$ and $\frac{\|F(\tx^{N+1})\|^2 - \|F(\tx^{N})\|^2}{\|\tx^0 - x^*\|^2}$ obtained from the SDPs, see~\citep[Proposition 2]{ryu2020operator}. Therefore a positive value for $\delta(\gamma, L, N)$ (resp. $\widetilde\delta(\gamma, L, N)$) implies the existence of some $L$-Lipschitz monotone $F$ satisfying $\|F(x^{N+1})\|^2\geq\|F(x^{N})\|^2$ (resp. $\|F(\tx^{N+1})\|^2\geq\|F(\tx^{N})\|^2$). Therefore, Fig.~\ref{fig:1c} allows concluding that $\|F(x^{N})\|^2$ (resp. $\|F(\tx^{N})\|^2$) is not a decreasing function of $N$ in general. This fact highlights the difference between \algname{EG} and \ref{eq:PEG} in terms of the analysis. However, this experiment also suggests that $\Delta(\gamma, L, N)$ is a decreasing function of $N$ (for some values of $\gamma$).

Therefore, although  $\|F(x^{N})\|^2$ is not a decreasing function of $N$ for all monotone $L$-Lipschitz operator $F$ and all starting point $x_0\in\mathbb{R}^d$, it appears that the difference $\|F(x^{N+1})\|^2 - \|F(x^{N})\|^2$ decrease to zero (for all starting point and all operator $F$ satisfying our assumptions) as $N$ grows. Those numerical results suggest that there is a chance to find a non-negative sequence $\{A_N\}_{N \geq 0}$ for which $\|F(x^{N+1})\|^2 + A_{N+1} \leq \|F(x^N)\|^2 + A_{N}$. To discover such a sequence, we carefully studied the numerical values of the solutions to the dual SDP for different values of $\gamma$ and $N$ and tried to infer some structure from them. The following section reports the positive results in that direction.

\section{Last Iterate Convergence of PEG in the Unconstrained Case}
\label{sec:proof}

The technique described in the previous section allowed performing an educated trial and error procedure, searching for appropriate potential functions. We finally found a suitable candidate $A_N$. In particular, one can numerically check that $\|F(x^{N+1})\|^2 + 2\|F(x^{N+1}) - F(\tx^N)\|^2 \leq \|F(x^{N})\|^2 + 2\|F(x^{N}) - F(\tx^{N-1})\|^2$ for all monotone $L$-Lipschitz $F$ for all tested values of $N$ and $\gamma \leq \nicefrac{\sqrt{2}}{3L}$. Using the few nonzero dual variables for the problem of verifying this potential function allows reaching the desired proof after a bit of manual cleaning.

\begin{lemma}\label{lem:key_lemma}
Under Assumption~\ref{assump:mon_lip}, the iterates of \ref{eq:PEG} with $\gamma > 0$ satisfy for any $k>0$,
    \begin{align}
        \|F(x^{k+1})\|^2 + 2\|F(x^{k+1}) - F(\tx^k)\|^2 &\leq \|F(x^{k})\|^2 + 2\|F(x^{k}) - F(\tx^{k-1})\|^2 \notag\\
        &\quad + 3\left(L^2\gamma^2 - \frac{2}{9}\right)\|F(\tx^k) - F(\tx^{k-1})\|^2.\label{eq:key_inequality}
    \end{align}
    We notice that the last term is non-positive for $0 < \gamma \leq \nicefrac{\sqrt{2}}{3L}$.
\end{lemma}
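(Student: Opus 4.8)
The plan is to read \eqref{eq:key_inequality} as a single quadratic inequality in the four operator values $F(x^{k+1}),F(x^{k}),F(\tx^k),F(\tx^{k-1})$ and to certify it, in the dual spirit of \S\ref{sec:path}, as a nonnegative combination of monotonicity/Lipschitz inequalities plus one perfect square. First I would record the displacements that \ref{eq:PEG} produces, namely $x^{k+1}-x^{k}=-\gamma F(\tx^k)$, $x^{k+1}-\tx^k=\gamma\big(F(\tx^{k-1})-F(\tx^k)\big)$, and $\tx^k-x^{k}=-\gamma F(\tx^{k-1})$. These are exactly the pairs whose position difference lies in $\mathrm{span}\{F(\tx^{k-1}),F(\tx^k)\}$; any pair pitting $\tx^{k-1}$ against $x^k$ or $x^{k+1}$ would reintroduce the stale value $F(\tx^{k-2})$, so I would avoid those, matching the ``distance-$1$'' structure suggested in \S\ref{sec:path}. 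Consistently with that discussion, the argument genuinely uses $F$ at the main iterate $x^k$ (through $F(x^{k+1})-F(x^{k})$), not only along the $\tx$-sequence, while the term $\|F(x^{k})-F(\tx^{k-1})\|^2$ is never expanded through Lipschitzness—it is simply carried as a potential term.

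Concretely, the certificate activates only two of these inequalities. Monotonicity on $(x^{k+1},x^{k})$, after substituting the displacement, gives $\langle F(x^{k+1})-F(x^{k}),F(\tx^k)\rangle\le 0$, and $L$-Lipschitzness on $(x^{k+1},\tx^k)$ gives $\|F(x^{k+1})-F(\tx^k)\|^2\le L^2\gamma^2\|F(\tx^k)-F(\tx^{k-1})\|^2$. The lemma then follows from the algebraic identity
\begin{align*}
&\|F(x^{k+1})\|^2+2\|F(x^{k+1})-F(\tx^k)\|^2-\|F(x^{k})\|^2-2\|F(x^{k})-F(\tx^{k-1})\|^2-3\Big(L^2\gamma^2-\tfrac{2}{9}\Big)\|F(\tx^k)-F(\tx^{k-1})\|^2\\
&\quad=3\Big(\|F(x^{k+1})-F(\tx^k)\|^2-L^2\gamma^2\|F(\tx^k)-F(\tx^{k-1})\|^2\Big)+2\langle F(x^{k+1})-F(x^{k}),F(\tx^k)\rangle\\
&\qquad-\tfrac{1}{3}\big\|3F(x^{k})-F(\tx^k)-2F(\tx^{k-1})\big\|^2,
\end{align*}
whose three right-hand terms are individually nonpositive—the first by Lipschitzness (weight $3$), the second by monotonicity (weight $2$), the third trivially. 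Rearranging yields \eqref{eq:key_inequality}, and reading off the sign of $3(L^2\gamma^2-\tfrac{2}{9})$ gives the closing remark.

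I would verify the identity by expanding both sides in the inner products of $\big(F(x^{k+1}),F(x^{k}),F(\tx^k),F(\tx^{k-1})\big)$ and matching coefficients; the only non-obvious cancellation is that the $2\langle F(x^{k+1}),F(\tx^k)\rangle$ supplied by monotonicity exactly corrects the linear-in-$F(x^{k+1})$ part of the expanded Lipschitz square, after which every monomial—including the entire $L^2\gamma^2$ dependence—matches. The real difficulty is upstream of this routine verification: guessing the weights $(3,2)$ and, above all, the exact squared combination $3F(x^{k})-F(\tx^k)-2F(\tx^{k-1})$ that absorbs the leftover quadratic form into one negative square. There is no a priori reason the leftover form should be rank one; it is precisely this rank-one collapse that confines the proof to two active inequalities and keeps it short, and it is exactly what the numerical dual certificates of \S\ref{sec:path} are designed to expose. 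Once the multipliers and the square are in hand, the remaining work is mechanical.
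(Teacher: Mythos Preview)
Your proposal is correct and follows essentially the same approach as the paper: both proofs activate exactly the same two inequalities---monotonicity at $(x^{k+1},x^{k})$ and $L$-Lipschitzness at $(x^{k+1},\tx^{k})$---with the same weights $2$ and $3$. Your presentation as a single algebraic identity with the explicit perfect square $-\tfrac{1}{3}\|3F(x^{k})-F(\tx^{k})-2F(\tx^{k-1})\|^2$ is precisely the equality version of the paper's step that applies \eqref{eq:a_minus_b} with $\alpha=\tfrac{1}{2}$ (whose slack is exactly that square), so the two arguments coincide.
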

\begin{proof}
    Since $F$ is monotone and $L$-Lipschitz, we have
    \begin{eqnarray*}
        0 &\leq& \langle F(x^{k+1}) - F(x^k), x^{k+1} - x^k \rangle,\\
        \| F(x^{k+1}) - F(\tx^k) \|^2 &\leq& L^2\|x^{k+1} - \tx^k\|^2.
    \end{eqnarray*}
    By definition of $x^{k+1}$ and $\tx^k$, we have $x^{k+1} - x^k = - \gamma F(\tx^k)$ and $x^{k+1} - \tx^k = x^k - \gamma F(\tx^k) - x^k + \gamma F(\tx^{k-1}) = \gamma\left(F(\tx^{k-1}) - F(\tx^k)\right)$. Plugging these relations to the above inequalities and dividing the first inequality by $\gamma$, we get
    \begin{eqnarray*}
        0 &\leq& \langle F(x^k) - F(x^{k+1}), F(\tx^k) \rangle,\\
        \| F(x^{k+1}) - F(\tx^k) \|^2 &\leq& L^2\gamma^2\|F(\tx^k) - F(\tx^{k-1})\|^2.
    \end{eqnarray*}
    Next, we sum up the above inequalities with weights $\lambda_1 = 2$ and $\lambda_2 = 3$ respectively:
    \begin{eqnarray*}
        3\|F(x^{k+1}) - F(\tx^k)\|^2 &\leq& 2\langle F(x^k) - F(x^{k+1}), F(\tx^k) \rangle +  3L^2\gamma^2\|F(\tx^k) - F(\tx^{k-1})\|^2\\
        &\overset{\eqref{eq:inner_product_representation}}{=}& \|F(x^k)\|^2 + \|F(\tx^k)\|^2 - \|F(x^k) - F(\tx^k)\|^2\\
        &&\quad + \|F(x^{k+1}) - F(\tx^k)\|^2 - \|F(x^{k+1})\|^2 - \|F(\tx^k)\|^2\\
        &&\quad  +  3L^2\gamma^2\|F(\tx^k) - F(\tx^{k-1})\|^2.
    \end{eqnarray*}
    Rearranging the terms, we derive
    \begin{eqnarray}
        \|F(x^{k+1})\|^2 + 2\|F(x^{k+1}) - F(\tx^k)\|^2 &\leq& \|F(x^k)\|^2 - \|F(x^k) - F(\tx^k)\|^2 \notag\\
        &&\quad + 3L^2\gamma^2\|F(\tx^k) - F(\tx^{k-1})\|^2. \label{eq:technical_1}
    \end{eqnarray}
    To estimate the negative term from the right-hand side of the above inequality we use that
    $-\|a-b\|^2 \overset{\eqref{eq:a_minus_b}}{\leq} -\frac{1}{1+\alpha}\|a\|^2 + \frac{1}{\alpha}\|b\|^2$
    with $a = F(\tx^k) - F(\tx^{k-1})$, $b = F(x^k) - F(\tx^{k-1})$ and $\alpha = \nicefrac{1}{2}$:
    \begin{eqnarray}
        - \|F(x^k) - F(\tx^k)\|^2 &\leq& - \frac{2}{3}\|F(\tx^{k-1}) - F(\tx^k)\|^2 + 2\|F(x^k) - F(\tx^{k-1})\|^2. \notag
    \end{eqnarray}
    Plugging the above inequality in \eqref{eq:technical_1}, we obtain
    \begin{eqnarray*}
        \|F(x^{k+1})\|^2 + 2\|F(x^{k+1}) - F(\tx^k)\|^2 &\leq& \|F(x^k)\|^2 + 2\|F(x^k) - F(\tx^{k-1})\|^2\\
        &&\quad + 3\left(L^2\gamma^2 - \frac{2}{9}\right)\|F(\tx^k) - F(\tx^{k-1})\|^2,
    \end{eqnarray*}
    which finishes the proof.
\end{proof}

\newcommand{\ThmUncon}{Let operator $F$ be monotone and $L$-Lipschitz. Then for all $k \geq 0$ the iterates of \ref{eq:PEG} with $\gamma \leq \nicefrac{1}{3L}$ satisfy $\Phi_{k+1} \leq \Phi_k$ with $\Phi_k$ defined as
    \begin{equation}
        \Phi_k = \|x^k - x^*\|^2 + \frac{k+32}{3}\gamma^2\left(\|F(x^{k})\|^2 + 2\|F(x^{k}) - F(\tx^{k-1})\|^2\right). \label{eq:potential}
    \end{equation}
    In particular, for all $N \geq 0$ and $\gamma \leq \nicefrac{1}{3L}$ the above formula implies
    \begin{equation}
        \|F(x^N)\|^2 \leq \frac{3(1+32L^2\gamma^2)\|x^0 - x^*\|^2}{\gamma^2(N+32)},\quad \texttt{Gap}_F(x^k) \leq \frac{2\sqrt{41}(1+32L^2\gamma^2)\|x^0 - x^*\|^2}{\gamma\sqrt{3N+96}}. \label{eq:main_result}
    \end{equation}}
Using this lemma we then show the last-iterate convergence rate of~\ref{eq:PEG}.
\begin{theorem} \label{thm:unconstr}
    \ThmUncon
\end{theorem}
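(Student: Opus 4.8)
\emph{Overview.} The crux is the one-step inequality $\Phi_{k+1}\le\Phi_k$; both displayed estimates in \eqref{eq:main_result} then follow by telescoping. Abbreviating the energy term appearing in \eqref{eq:potential} by $E_k \eqdef \|F(x^{k})\|^2 + 2\|F(x^{k}) - F(\tx^{k-1})\|^2$, I would start from the decomposition
\begin{equation*}
\Phi_{k+1}-\Phi_k = \left(\|x^{k+1}-x^*\|^2-\|x^k-x^*\|^2\right) + \frac{\gamma^2}{3}\left((k+33)E_{k+1}-(k+32)E_k\right),
\end{equation*}
and bound the distance part and the energy part separately.

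For the distance part, the step identities $x^{k+1}-x^k=-\gamma F(\tx^k)$ and $x^k-\tx^k=\gamma F(\tx^{k-1})$, combined with monotonicity at the pair $(\tx^k,x^*)$ — which reads $\langle F(\tx^k),\tx^k-x^*\rangle\ge 0$ since $F(x^*)=0$ — give, after expanding the square and discarding that nonnegative inner product,
\begin{equation*}
\|x^{k+1}-x^*\|^2-\|x^k-x^*\|^2 \le \gamma^2\|F(\tx^k)-F(\tx^{k-1})\|^2 - \gamma^2\|F(\tx^{k-1})\|^2 .
\end{equation*}
For the energy part I would write $(k+33)E_{k+1}-(k+32)E_k = (k+32)(E_{k+1}-E_k)+E_{k+1}$, bound the drift $E_{k+1}-E_k$ by Lemma~\ref{lem:key_lemma} (a nonpositive multiple of $\|F(\tx^k)-F(\tx^{k-1})\|^2$ once $\gamma\le\nicefrac{1}{3L}$), and control the surviving $E_{k+1}$ through Lipschitzness, using $\|F(x^{k+1})-F(\tx^k)\|^2\le L^2\gamma^2\|F(\tx^k)-F(\tx^{k-1})\|^2$ (from $x^{k+1}-\tx^k=\gamma(F(\tx^{k-1})-F(\tx^k))$) together with a Young split of $\|F(x^{k+1})\|^2$ around $F(\tx^k)$. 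A further Young inequality exchanging $\|F(\tx^k)\|^2$, $\|F(\tx^{k-1})\|^2$ and $\|F(\tx^k)-F(\tx^{k-1})\|^2$ then makes every surviving term a multiple of $\|F(\tx^k)-F(\tx^{k-1})\|^2$.

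\emph{Main obstacle.} Everything hinges on this last collapse. The growing coefficient forces the extra term $\tfrac{\gamma^2}{3}E_{k+1}$, which after the Lipschitz split contributes essentially $\tfrac13\|F(\tx^k)\|^2$; this must be absorbed by the distance gain $-\gamma^2\|F(\tx^{k-1})\|^2$ together with the negative drift $\gamma^2(k+32)(L^2\gamma^2-\tfrac29)\|F(\tx^k)-F(\tx^{k-1})\|^2$ coming from Lemma~\ref{lem:key_lemma}. I expect the verification to reduce to showing that a coefficient of the form $c_1 + c_2 L^2\gamma^2 - (k+32)(\tfrac29-L^2\gamma^2)$, with moderate absolute constants $c_1,c_2>0$, is nonpositive for every $k\ge 0$. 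This is exactly where the restriction $\gamma\le\nicefrac{1}{3L}$ enters (it makes $\tfrac29-L^2\gamma^2\ge\tfrac19>0$, so the drift is genuinely negative and grows with $k$) and where the constant $32$ and the weight $2$ in \eqref{eq:potential} — produced by the PEP search of \S\ref{sec:path} — are calibrated so that even the $k=0$ term is covered with room to spare. The boundary case $k=0$ additionally requires the convention $F(\tx^{-1})=0$, under which both the distance estimate and Lemma~\ref{lem:key_lemma} remain valid.

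\emph{From the potential to the rates.} Telescoping $\Phi_{k+1}\le\Phi_k$ gives $\Phi_N\le\Phi_0$. With $F(\tx^{-1})=0$ one has $E_0=3\|F(x^0)\|^2$, hence $\Phi_0=\|x^0-x^*\|^2+32\gamma^2\|F(x^0)\|^2$, and Lipschitzness with $F(x^*)=0$ yields $\Phi_0\le(1+32L^2\gamma^2)\|x^0-x^*\|^2$. Discarding the nonnegative terms $\|x^N-x^*\|^2$ and $2\|F(x^N)-F(\tx^{N-1})\|^2$ in $\Phi_N$ leaves $\tfrac{N+32}{3}\gamma^2\|F(x^N)\|^2\le\Phi_0$, which rearranges to the first bound in \eqref{eq:main_result}. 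For the gap, the same chain gives $\|x^N-x^*\|^2\le\Phi_0\le\tfrac{41}{9}\|x^0-x^*\|^2$ when $\gamma\le\nicefrac{1}{3L}$; choosing $R=\tfrac{\sqrt{41}}{3}\|x^0-x^*\|$, every feasible $y$ satisfies $\|x^N-y\|\le\|x^N-x^*\|+\|x^*-y\|\le 2R$, and monotonicity (giving $\langle F(y),x^N-y\rangle\le\langle F(x^N),x^N-y\rangle$) together with Cauchy--Schwarz yields $\texttt{Gap}_F(x^N)\le 2R\,\|F(x^N)\|$. Substituting the operator-norm bound and using $\sqrt{1+32L^2\gamma^2}\le 1+32L^2\gamma^2$ produces the second bound in \eqref{eq:main_result}.
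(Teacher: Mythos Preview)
Your proposal is correct and follows essentially the same approach as the paper: the distance bound, the use of Lemma~\ref{lem:key_lemma} for the energy drift, and the derivation of both rates from $\Phi_N\le\Phi_0$ all match the paper line by line. The only cosmetic difference is in the split of the energy increment: you write $(k+33)E_{k+1}-(k+32)E_k=(k+32)(E_{k+1}-E_k)+E_{k+1}$ and therefore have to control $E_{k+1}$, whereas the paper uses the equivalent split $(k+33)(E_{k+1}-E_k)+E_k$ and controls $E_k$. The paper's choice is marginally cleaner because $E_k$ already contains $F(\tx^{k-1})$, so a single Young inequality on $\|F(x^k)\|^2$ plus Lipschitzness at $(x^k,\tx^k)$ suffices to match the $-\gamma^2\|F(\tx^{k-1})\|^2$ from the distance part; your route requires one additional Young step to pass from $\|F(\tx^k)\|^2$ to $\|F(\tx^{k-1})\|^2$, but the resulting coefficient check at $k=0$ still closes comfortably under $\gamma\le\nicefrac{1}{3L}$.
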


As we write before, in contrast to \citet{golowich2020tight}, our results does not rely on the Lipschitzness of $\nabla F(x)$. Moreover, even when $\nabla F(x)$ is assumed to be $\Lambda$-Lipschitz, our result improve upon~\citet{golowich2020tight} in certain regimes. In particular, if $\Lambda \|x^0 - x^*\| \gg L$, which is typically the case (e.g., see Appendix~B from \citet{gorbunov2021extragradient} for the details), the constants in our upper bounds are significantly smaller and even when $\Lambda \|x^0 - x^*\| \ll L$ our result allows for stepsizes $50$ times larger with a improvement by a factor $\approx 10^6$ in terms of convergence rate for $\|F(x^k)\|^2$.

Next, our analysis significantly differs from the previous known one from \citet{golowich2020tight}. In particular, using Lipschitzness of Jacobian, \citet{golowich2020tight} derive that $\|F(\tx^N)\|^2$ is not much larger than $\min_{k=0,\ldots,N}\|F(x^k)\|^2$, which instantly implies a $\cO(\nicefrac{1}{N})$ last-iterate convergence rate after applying standard results for the best-iterate convergence of \ref{eq:PEG}. In contrast, we do not derive any connections between the best and the last iterates of \ref{eq:PEG} and directly rely on the fact that $\|F(x^{k})\|^2 + 2\|F(x^{k}) - F(\tx^{k-1})\|^2$ is a decreasing function of $k$, i.e., on Lemma~\ref{lem:key_lemma}.

We also remark that the derived upper bounds are worse than those that we obtained numerically and reported in Fig.~\ref{fig:1a}. For example, when $N = 50$ and $\gamma = \nicefrac{1}{3L}$ the upper bound on $\|F(x^N)\|^2$ from Theorem~\ref{thm:unconstr} is $\approx 20$ times worse than the upper bound found numerically. Since our goal was in deriving a \emph{simple} proof of the $\cO(\nicefrac{1}{N})$ rate, we did not try to improve the multiplicative constants in the final results. We also do not plot the curve corresponding to the upper bound on $\|F(x^N)\|^2$ from Theorem~\ref{thm:unconstr} for the sake of readability of Fig.~\ref{fig:1a}.

\section{Last Iterate Convergence of PEG in the Constrained Case}
\label{sec:proof_const}
In this section, we extend the last-iterate convergence rates proven in  Theorem~\ref{thm:unconstr} to the constrained case. Such an extension is not straightforward because the convergence criterion considered in the unconstrained case cannot be used anymore in the constrained case. That is, Lemma~\ref{lem:key_lemma} is not valid anymore and cannot be easily adapted to the constrained case: norm of the operator $\|F(x^k)\|$ does not necessarily converge to zero since $F(x^*) \neq 0$ in general. However, to find new potential function we used the same techniques as in the unconstrained case. This search led us to the following result.
\newcommand{\LemmaConst}{
 Under Assumption~\ref{assump:mon_lip}, the iterates of \ref{eq:Proj_PEG} with $\gamma > 0$ satisfy for any $k>0$,
    \begin{eqnarray}
        \Psi_{k+1} \leq \Psi_k - \left(1 - 5L^2\gamma^2\right)\|x^{k+1} - \tx^k\|^2 - \gamma^2\|F(x^{k+1}) - F(\tx^k)\|^2,\label{eq:key_inequality_constrained}
    \end{eqnarray}
    where $\Psi_k = \|x^{k} - x^{k-1}\|^2 + \|x^{k} - x^{k-1} - 2\gamma(F(x^{k}) - F(\tx^{k-1}))\|^2$.}
\begin{lemma}\label{lem:key_lemma_constrained}
   \LemmaConst
\end{lemma}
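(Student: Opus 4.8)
The plan is to mirror the structure of the unconstrained Lemma~\ref{lem:key_lemma}, but to replace the exact update identities (which are no longer available under projection) by the variational characterization of the projection. Recall that $u=\proj[z]$ is equivalent to $\langle u-z,\,y-u\rangle\geq 0$ for all $y\in\cX$. Applied to the two projection steps at iteration $k$, namely $\tx^k=\proj[x^k-\gamma F(\tx^{k-1})]$ and $x^{k+1}=\proj[x^k-\gamma F(\tx^k)]$, this yields
\begin{align*}
\langle \tx^k-x^k+\gamma F(\tx^{k-1}),\,y-\tx^k\rangle &\geq 0,\\
\langle x^{k+1}-x^k+\gamma F(\tx^k),\,y-x^{k+1}\rangle &\geq 0,
\end{align*}
valid for every $y\in\cX$; since all iterates lie in $\cX$, each of $x^{k-1},x^k,\tx^{k-1},\tx^k,x^{k+1}$ is an admissible test point. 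I would also record the analogous inequality for the step $x^k=\proj[x^{k-1}-\gamma F(\tx^{k-1})]$, since $\Psi_k$ involves $x^{k-1}$ and $F(\tx^{k-1})$.

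First I would expand both potentials through the polarization identity, writing $\Psi_{k}=2\|x^k-x^{k-1}\|^2-4\gamma\langle x^k-x^{k-1},F(x^k)-F(\tx^{k-1})\rangle+4\gamma^2\|F(x^k)-F(\tx^{k-1})\|^2$ and likewise for $\Psi_{k+1}$. The cross term in $\Psi_{k+1}$ is where monotonicity enters: splitting $x^{k+1}-x^k=(x^{k+1}-\tx^k)+(\tx^k-x^k)$ and using $\langle F(x^{k+1})-F(\tx^k),\,x^{k+1}-\tx^k\rangle\geq 0$ isolates the ``good'' part of the inner product, while the remaining part $\langle F(x^{k+1})-F(\tx^k),\,\tx^k-x^k\rangle$ must be controlled via the projection inequalities. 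The residual term $\gamma^2\|F(x^{k+1})-F(\tx^k)\|^2$ on the right-hand side of the claim is exactly the leftover of this expansion, while the coefficient $5L^2\gamma^2$ is produced by invoking the Lipschitz bound $\|F(x^{k+1})-F(\tx^k)\|^2\leq L^2\|x^{k+1}-\tx^k\|^2$ (together with analogous Lipschitz/monotonicity relations among $\tx^k,\tx^{k-1}$) with the appropriate weight and a Young-type split.

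The heart of the argument is to reproduce, up to nonnegative slack, the unconstrained identities $x^{k+1}-x^k=-\gamma F(\tx^k)$ and $x^{k+1}-\tx^k=\gamma(F(\tx^{k-1})-F(\tx^k))$. For instance, summing the two displayed projection inequalities, the first with $y=x^{k+1}$ and the second with $y=\tx^k$, gives
\[
-\|x^{k+1}-\tx^k\|^2+\gamma\langle F(\tx^k)-F(\tx^{k-1}),\,\tx^k-x^{k+1}\rangle\geq 0,
\]
which is the constrained surrogate for those identities and feeds directly into the $(1-5L^2\gamma^2)\|x^{k+1}-\tx^k\|^2$ term. I would then assemble $\Psi_{k+1}-\Psi_k$ together with the two target negative terms as a single nonnegative combination of (i) monotonicity between $(x^{k+1},\tx^k)$ and between the consecutive $\tx$-iterates, (ii) the Lipschitz inequality between $(x^{k+1},\tx^k)$, and (iii) the three projection inequalities instantiated at the test points dictated by the numerically obtained dual certificate, and verify that everything cancels.

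The main obstacle is precisely step (iii). In the unconstrained proof the identity $x^{k+1}-x^k=-\gamma F(\tx^k)$ made all cross terms cancel for free, whereas here each such cancellation must be engineered by choosing the right test point $y$ and the right multiplier in the projection inequalities. Pinning down this assignment---rather than the final expansion, which is routine---is where the SDP dual solution is indispensable, and it is also what turns the cleaner unconstrained constant into the slightly larger $5L^2\gamma^2$ factor.
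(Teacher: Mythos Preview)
Your high-level plan---assemble a weighted sum of monotonicity, Lipschitz, and projection inequalities and then verify the algebra---is exactly the paper's strategy. The specific ingredients you commit to, however, do not line up with the certificate the paper uses, and the mismatch is not cosmetic. The potential $\Psi_k$ contains $F(x^k)$ (not only operator values at extrapolated points), and $\Psi_{k+1}$ contains $F(x^{k+1})$; to pass from one to the other you need an inequality that couples $F(x^k)$ with $F(x^{k+1})$. Your proposed monotonicity pairs $(x^{k+1},\tx^k)$ and $(\tx^k,\tx^{k-1})$ involve neither $F(x^k)$ nor $F(x^{k+1})$ together, and none of the projection inequalities carry $F(x^k)$ at all (they only see $F(\tx^{k-1})$ and $F(\tx^k)$). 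With your list, the $F(x^k)$-dependent terms coming from $\Psi_k$ have no partner and cannot be cancelled; the proof cannot close as written. The paper fixes this by using monotonicity at $(x^{k+1},x^k)$ rather than at $(x^{k+1},\tx^k)$---that single inequality is the bridge between $\Psi_{k+1}$ and $\Psi_k$.

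For reference, the paper's certificate sums six (not five) inequalities: monotonicity at $(x^{k+1},x^k)$ with weight $4\gamma$; Lipschitzness at $(x^{k+1},\tx^k)$ with weight $5\gamma^2$; and \emph{four} projection inequalities---$x^{k+1}=\proj[x^k-\gamma F(\tx^k)]$ tested at $x^k$ (weight $4$), $x^{k}=\proj[x^{k-1}-\gamma F(\tx^{k-1})]$ tested at $x^{k+1}$ and at $\tx^k$ (each weight $2$), and $\tx^{k}=\proj[x^{k}-\gamma F(\tx^{k-1})]$ tested at $x^{k+1}$ (weight $2$). After expanding and applying $2\langle a,b\rangle\le\|a\|^2+\|b\|^2$ and $\|a+b\|^2\le 2\|a\|^2+2\|b\|^2$ once each, everything collapses to the stated bound. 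Your surrogate $-\|x^{k+1}-\tx^k\|^2+\gamma\langle F(\tx^k)-F(\tx^{k-1}),\tx^k-x^{k+1}\rangle\ge 0$ is a correct consequence of two of these four projection constraints, but it is not where the essential work happens.
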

\begin{proof}[Proof sketch]
    Numerically we discovered that to prove this result it is sufficient to sum up constraints corresponding to the monotonicity at $(x^k, x^{k+1})$, Lipschitzness at $(x^{k+1}, \tx^k)$, and projections properties\footnote{By projection property for the pair $(x_{+}, y)$, where $x_{+} = \proj[x]$, $y \in \cX$, we mean $\langle x - x_{+}, y - x_{+} \rangle \leq 0$.} for the pairs $(x^{k+1},x^k), (x^k, x^{k+1}), (x^k, \tx^k), (\tx^k, x^{k+1})$, with weights $4\gamma, 5\gamma^2, 4, 2, 2, 2$ respectively. After that, it remains to rearrange the terms and apply standard inequalities from Appendix \ref{app:useful_ineqs} to derive the result. See the detailed proof in Appendix~\ref{app:thm_cons}.
\end{proof}

This lemma is critically different from Lemma~\ref{lem:key_lemma} since in the unconstrained case the potential from Lemma~\ref{lem:key_lemma_constrained} reduces to $\Psi_k = \|F(\tilde x^{k-1})\|^2 + \|F(\tilde x^{k-1}) - 2\gamma(F(x^{k}) - F(\tx^{k-1}))\|^2$. Moreover, as one can see from the next result, using the potential from Lemma~\ref{lem:key_lemma_constrained}, we obtain the proof valid for slightly smaller stepsize than in the unconstrained case (see also Appendix~\ref{app:extra_exp} for the numerical verification of the rate).

\newcommand{\ThmConst}{ Under Assumption~\ref{assump:mon_lip}, the iterates of \ref{eq:Proj_PEG} with $\gamma \leq \nicefrac{1}{4L}$ satisfy $\Phi_{k+1} \leq \Phi_k\,,\; k \geq 2$ with $\Phi_k$ defined as
    \begin{equation}
        \Phi_k = \|x^k - x^*\|^2 + \frac{1}{16}\|\tx^{k-1} - \tx^{k-2}\|^2 + \frac{3k + 32}{24}\Psi_k, \label{eq:potential_constrained}
    \end{equation}
    where $\Psi_k = \|x^{k} - x^{k-1}\|^2 + \|x^{k} - x^{k-1} - 2\gamma(F(x^{k}) - F(\tx^{k-1}))\|^2$. In particular, it implies
    \begin{equation}
        \|x^{N} - x^{N-1}\|^2 \leq \frac{24H_{0,\gamma}^2}{3N+32},\quad \texttt{Gap}_F(x^N) \leq \frac{8\sqrt{3}H_{0,\gamma}\cdot H_0}{\gamma\sqrt{3N+32}}, \quad \forall N \geq 2\,, \label{eq:main_result_constrained}
    \end{equation}
    where $H_0, H_{0,\gamma} > 0$ are such that $H_{0,\gamma}^2 =  2(1 + 3\gamma^2L^2 + 4\gamma^4 L^4)\|x^0 - x^*\|^2 + \left(\frac{41}{12} + \frac{19}{3}\gamma^2 L^2\right)\gamma^2\|F(x^0)\|^2$, $H_{0}^2 = 3\|x^0 - x^*\|^2 + \tfrac{1}{30L^2}\|F(x^0)\|^2$.}
\begin{theorem} \label{thm:const}
  \ThmConst
\end{theorem}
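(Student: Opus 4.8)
The plan is to take Lemma~\ref{lem:key_lemma_constrained} as the engine driving the decrease of $\Psi_k$ and to reduce the whole theorem to the single statement $\Phi_{k+1}\le\Phi_k$ for $k\ge 2$; the two rates in~\eqref{eq:main_result_constrained} then follow by telescoping. Writing $c_k=\tfrac{3k+32}{24}$, so that $c_{k+1}=c_k+\tfrac18$, I would decompose
\begin{align*}
\Phi_{k+1}-\Phi_k &= \big(\|x^{k+1}-x^*\|^2-\|x^k-x^*\|^2\big) + \tfrac1{16}\big(\|\tx^k-\tx^{k-1}\|^2-\|\tx^{k-1}-\tx^{k-2}\|^2\big)\\
&\quad + c_k\big(\Psi_{k+1}-\Psi_k\big) + \tfrac18\Psi_{k+1}.
\end{align*}
By~\eqref{eq:key_inequality_constrained}, the term $c_k(\Psi_{k+1}-\Psi_k)$ is a \emph{growing} reservoir of negative quantities $-c_k(1-5L^2\gamma^2)\|x^{k+1}-\tx^k\|^2-c_k\gamma^2\|F(x^{k+1})-F(\tx^k)\|^2$, and the entire job is to show that this reservoir, together with the negative contribution of the distance term, dominates the leftover positive pieces for every $k\ge 2$ and $\gamma\le\nicefrac1{4L}$.

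For the distance term I would use the defining projection $x^{k+1}=\proj[x^k-\gamma F(\tx^k)]$ at the test point $y=x^*$, together with monotonicity and the variational inequality $\langle F(x^*),\tx^k-x^*\rangle\ge 0$, to discard $\langle F(\tx^k),\tx^k-x^*\rangle\ge 0$ and reach $\|x^{k+1}-x^*\|^2\le\|x^k-x^*\|^2-\|x^{k+1}-x^k\|^2-2\gamma\langle F(\tx^k),x^{k+1}-\tx^k\rangle$. The residual cross term is the delicate part: splitting $F(\tx^k)=F(\tx^{k-1})+(F(\tx^k)-F(\tx^{k-1}))$, the first piece is handled by the projection property of $\tx^k=\proj[x^k-\gamma F(\tx^{k-1})]$ at $y=x^{k+1}$ (a polarization identity turns it into $\|x^{k+1}-x^k\|^2-\|x^k-\tx^k\|^2-\|x^{k+1}-\tx^k\|^2$), while the second piece, after Young's inequality and $L$-Lipschitzness, contributes a multiple of $\|\tx^k-\tx^{k-1}\|^2$. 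This is precisely the reason $\Phi_k$ carries the auxiliary summand $\tfrac1{16}\|\tx^{k-1}-\tx^{k-2}\|^2$: it is a bookkeeping term whose purpose is to telescope against these $\|\tx^k-\tx^{k-1}\|^2$ contributions.

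The main obstacle is the final accounting. After these substitutions everything is expressed through the squared residuals $\|x^{k+1}-\tx^k\|^2$, $\|x^k-\tx^k\|^2$, $\|x^{k+1}-x^k\|^2$, $\|F(x^{k+1})-F(\tx^k)\|^2$ and the difference terms $\|\tx^k-\tx^{k-1}\|^2$, and I must choose the free Young parameter(s) and verify that the resulting quadratic form in these quantities is nonpositive for every $k\ge 2$ once $\gamma\le\nicefrac1{4L}$. The key mechanism is that $c_k\ge\tfrac{38}{24}>1$ grows with $k$, so the reservoir $-c_k(1-5L^2\gamma^2)\|x^{k+1}-\tx^k\|^2$ (note $1-5L^2\gamma^2\ge\tfrac{11}{16}>0$ at $\gamma=\nicefrac1{4L}$) comfortably absorbs the $O(1)$ positive pieces arising from $\tfrac18\Psi_{k+1}$ and from the cross-term estimate, while the $\tfrac1{16}$-weighted differences telescope. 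Matching these coefficients is exactly the step that the computer-aided dual-certificate search pinned down, so I would either organize the terms by hand as in the proof of Lemma~\ref{lem:key_lemma} or, failing a clean hand-proof, certify nonnegativity of the final combination with the same PEP/SDP machinery.

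With monotonicity in hand the rates are routine. Telescoping gives $\Phi_N\le\Phi_2$ for $N\ge 2$, and since $\Phi_N\ge c_N\Psi_N\ge\tfrac{3N+32}{24}\|x^N-x^{N-1}\|^2$, it remains to bound $\Phi_2$ by $H_{0,\gamma}^2$. This base case I would get by unrolling the first two iterations explicitly ($\tx^0=x^0$, $x^1=\proj[x^0-\gamma F(x^0)]$, $\tx^1=\proj[x^1-\gamma F(x^0)]$, $x^2=\proj[x^1-\gamma F(\tx^1)]$) and bounding each of $\|x^2-x^*\|$, $\|\tx^1-\tx^0\|$ and $\Psi_2$ in terms of $\|x^0-x^*\|$ and $\|F(x^0)\|$ using non-expansiveness of $\proj$, Lipschitzness, monotonicity and the variational inequality at $x^*$; collecting constants produces exactly the stated $H_{0,\gamma}^2$, which yields the first inequality in~\eqref{eq:main_result_constrained}. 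Along the way the same bound gives the uniform estimate $\|x^k-x^*\|^2\le\Phi_2\le H_{0,\gamma}^2$ that legitimizes the restricted gap radius $R$. Finally, for the gap, standard projection-and-monotonicity manipulations (inserting $\tx^{N-1}$ and using $x^N=\proj[x^{N-1}-\gamma F(\tx^{N-1})]$, so that $\tfrac1\gamma(x^{N-1}-x^N)$ plays the role of $F(\tx^{N-1})$) bound $\texttt{Gap}_F(x^N)$ by a constant multiple of $\tfrac{R}{\gamma}\|x^N-x^{N-1}\|$, with all intermediate residuals (e.g.\ $\|x^N-\tx^{N-1}\|$) within constant factors of $\|x^N-x^{N-1}\|$; substituting the just-proved residual rate and $R=O(H_0)$ delivers the stated $\tfrac{8\sqrt3\,H_{0,\gamma}H_0}{\gamma\sqrt{3N+32}}$ bound.
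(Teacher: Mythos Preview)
Your overall architecture matches the paper's proof: decompose $\Phi_{k+1}-\Phi_k$, feed Lemma~\ref{lem:key_lemma_constrained} into the $c_k(\Psi_{k+1}-\Psi_k)$ piece, derive a distance-decrease inequality that leaves behind $-\|\tx^k-x^k\|^2+\gamma^2L^2\|\tx^k-\tx^{k-1}\|^2$ (your derivation via the projection at $\tx^k$ tested at $x^{k+1}$ is exactly Lemma~\ref{lem:lemma_5_Gauthier}), and then close the accounting. The base-case unrolling and the gap bound are also essentially the paper's; one refinement is that the paper bounds $\texttt{Gap}_F(x^N)$ through $\|x^N-x^{N-1}-\gamma(F(x^N)-F(\tx^{N-1}))\|\le\sqrt{\Psi_N/2}$ rather than through $\|x^N-x^{N-1}\|$ alone, which is why the second representation of $\Psi_k$ matters.

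There is, however, a genuine gap in your handling of the $\|\tx^k-\tx^{k-1}\|^2$ term. You describe the auxiliary summand $\tfrac1{16}\|\tx^{k-1}-\tx^{k-2}\|^2$ as a bookkeeping term that ``telescopes against'' the $\|\tx^k-\tx^{k-1}\|^2$ contribution, but that is not what happens. After your distance estimate and the $\tfrac1{16}$-difference, $\Phi_{k+1}-\Phi_k$ carries the net positive piece $(\gamma^2L^2+\tfrac1{16})\|\tx^k-\tx^{k-1}\|^2-\tfrac1{16}\|\tx^{k-1}-\tx^{k-2}\|^2$, and there is no reason this is nonpositive (the extrapolation increments are not monotone). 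The reservoir $-c_k(1-5L^2\gamma^2)\|x^{k+1}-\tx^k\|^2-c_k\gamma^2\|F(x^{k+1})-F(\tx^k)\|^2$ does not help here: neither quantity controls $\|\tx^k-\tx^{k-1}\|^2$. The missing ingredient is precisely Lemma~\ref{lem:lemma_6_Gauthier} (from \citet{gidel2019variational}), a \emph{recursive} bound
\[
\|\tx^k-\tx^{k-1}\|^2 \;\le\; 4\|\tx^k-x^k\|^2 + 4\gamma^2L^2\|\tx^{k-1}-\tx^{k-2}\|^2 - \|\tx^k-\tx^{k-1}\|^2,
\]
which trades the dangerous $\|\tx^k-\tx^{k-1}\|^2$ for a multiple of $\|\tx^k-x^k\|^2$ (absorbed by the $-\|\tx^k-x^k\|^2$ you already produced) plus a multiple of $\|\tx^{k-1}-\tx^{k-2}\|^2$ (and \emph{this} is what the $\tfrac1{16}$ term is there to swallow). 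With this lemma in place, the paper obtains the clean intermediate inequality
\[
\|x^{k+1}-x^*\|^2+\tfrac1{16}\|\tx^k-\tx^{k-1}\|^2 \;\le\; \|x^k-x^*\|^2+\tfrac1{16}\|\tx^{k-1}-\tx^{k-2}\|^2-\tfrac34\|\tx^k-x^k\|^2,
\]
after which the remaining accounting (absorbing $\tfrac18\Psi_{k+1}$ by $-\tfrac34\|\tx^k-x^k\|^2$ and the reservoir) is a two-line computation, with no Young parameter left to tune. Your fallback ``certify it with the SDP'' would work numerically, but the analytic proof really does hinge on this one extra lemma that your sketch omits.
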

This result extends last-iterate convergence of~\ref{eq:Proj_PEG} to the constrained case. The closest result in the literature is~\citet[Theorem 3]{cai2022tight} that gives a last-iterate convergence result for \algname{EG} in terms of the tangent residual and gap function. Moreover, we show a $\cO(\nicefrac{1}{N})$ last-iterate convergence result for the residuals, which is stronger than the result in~\citet[Theorem 3]{cai2022tight}. Moreover, our results justify that higher-order SOS programs (with polynomials of degree larger than $2$) used by \citet{cai2022tight} are not required to derive tight last-iterate convergence results for \ref{eq:Proj_PEG}. We believe that this is the case for other \algname{EG}-like methods and similar rates could be proven for the residuals of these methods as well (see Appendix~\ref{app:extra_exp} for preliminary results supporting this claim).

\section{Discussion}
\label{sec:discussion}
In this work, we leveraged the performance estimation problem framework to show the last-iterate convergence of \ref{eq:PEG} of monotone and Lipschitz operators both in the constrained and unconstrained cases. These results answer some important questions that remained open until now in the variational inequality literature showcasing the appealing properties of extrapolation-based methods. However, important open questions remain such as last-iterate convergence rates for stochastic methods for monotone and Lipschitz variational inequalities or a better understanding of the dynamics in non-monotone cases that occur in machine learning applications. Finally, the results obtained for \ref{eq:PEG} in this work have worse multiplicative constants and more restrictive range of stepsizes than those obtained for \algname{EG} \citet{gorbunov2021extragradient, cai2022tight}. It would be interesting to address this discrepancy in the future work.

\begin{ack}
The authors would like to warmly thank Sylvain Sorin for spotting a typo in Appendix~\ref{app:thm_cons}, as well as for a few suggestions of improvements.

This work was partially supported by a grant for research centers in the field of artificial intelligence, provided by the Analytical Center for the Government of the Russian Federation in accordance with the subsidy agreement (agreement identifier 000000D730321P5Q0002) and the agreement with the Moscow Institute of Physics and Technology dated November 1, 2021 No. 70-2021-00138. A.~Taylor acknowledges support from the French ``Agence Nationale de la Recherche'' as part of the ``Investissements d’avenir'' program, reference ANR-19-P3IA-0001 (PRAIRIE 3IA Institute), as well as support from the European Research Council (grant SEQUOIA
724063).
\end{ack}

\bibliography{refs}

\section*{Checklist}

\begin{enumerate}

\item For all authors...
\begin{enumerate}
  \item Do the main claims made in the abstract and introduction accurately reflect the paper's contributions and scope?
    \answerYes{}
  \item Did you describe the limitations of your work?
    \answerYes{See \S\ref{sec:intro} and \S\ref{sec:discussion}}
  \item Did you discuss any potential negative societal impacts of your work?
    \answerNA{}
  \item Have you read the ethics review guidelines and ensured that your paper conforms to them?
    \answerNA{}
\end{enumerate}

\item If you are including theoretical results...
\begin{enumerate}
  \item Did you state the full set of assumptions of all theoretical results?
    \answerYes{See \S\ref{sec:intro}, \S\ref{sec:proof}, and \S\ref{sec:proof_const}}
        \item Did you include complete proofs of all theoretical results?
    \answerYes{See \S\ref{sec:proof} and Appendices~\ref{app:proof_uncons} and \ref{app:thm_cons}}
\end{enumerate}

\item If you ran experiments...
\begin{enumerate}
  \item Did you include the code, data, and instructions needed to reproduce the main experimental results (either in the supplemental material or as a URL)?
    \answerYes{Codes are publicly available: \url{https://github.com/eduardgorbunov/potentials_and_last_iter_convergence_for_VIPs}}
  \item Did you specify all the training details (e.g., data splits, hyperparameters, how they were chosen)?
    \answerNA{}
        \item Did you report error bars (e.g., with respect to the random seed after running experiments multiple times)?
    \answerNA{}
        \item Did you include the total amount of compute and the type of resources used (e.g., type of GPUs, internal cluster, or cloud provider)?
    \answerNA{}
\end{enumerate}

\item If you are using existing assets (e.g., code, data, models) or curating/releasing new assets...
\begin{enumerate}
  \item If your work uses existing assets, did you cite the creators?
    \answerNA{}
  \item Did you mention the license of the assets?
    \answerNA{}
  \item Did you include any new assets either in the supplemental material or as a URL?
    \answerNA{}
  \item Did you discuss whether and how consent was obtained from people whose data you're using/curating?
    \answerNA{}
  \item Did you discuss whether the data you are using/curating contains personally identifiable information or offensive content?
    \answerNA{}
\end{enumerate}

\item If you used crowdsourcing or conducted research with human subjects...
\begin{enumerate}
  \item Did you include the full text of instructions given to participants and screenshots, if applicable?
    \answerNA{}
  \item Did you describe any potential participant risks, with links to Institutional Review Board (IRB) approvals, if applicable?
    \answerNA{}
  \item Did you include the estimated hourly wage paid to participants and the total amount spent on participant compensation?
    \answerNA{}
\end{enumerate}

\end{enumerate}


\newpage

\appendix

\section{Useful Facts}\label{app:useful_ineqs}

\paragraph{Standard inequalities.} For all $a,b \in \R^d$ and $\alpha > 0$ the following relations hold:
\begin{eqnarray}
    2\langle a, b \rangle &=& \|a\|^2 + \|b\|^2 - \|a - b\|^2, \label{eq:inner_product_representation}\\
    \|a + b\|^2 &\leq& (1+\alpha)\|a\|^2 + (1 + \alpha^{-1})\|b\|^2, \label{eq:a_plus_b}\\
    -\|a-b\|^2 &\leq& -\frac{1}{1+\alpha}\|a\|^2 + \frac{1}{\alpha}\|b\|^2. \label{eq:a_minus_b} 
\end{eqnarray}

\paragraph{Auxiliary results.} In the analysis of \ref{eq:Proj_PEG}, we use two lemmas from \citet{gidel2019variational}.

\begin{lemma}[Lemma 5 from \citet{gidel2019variational}]\label{lem:lemma_5_Gauthier}
    Let operator $F$ be $L$-Lipschitz. Then for all $k \geq 1$ the iterates of \ref{eq:Proj_PEG} with $\gamma > 0$ satisfy
    \begin{equation}
        2\gamma\langle F(\tx^k), \tx^k - x^* \rangle \leq \|x^k - x^*\|^2 - \|x^{k+1} - x^*\|^2 - \|\tx^k - x^k\|^2 + \gamma^2 L^2 \|\tx^k - \tx^{k-1}\|^2. \label{eq:lemma_5_Gauthier}
    \end{equation}
\end{lemma}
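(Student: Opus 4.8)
The plan is to prove Lemma~\ref{lem:lemma_5_Gauthier} by a standard projection-based argument, using only three ingredients: the variational characterization of the Euclidean projection recalled in the footnote (for $x_+ = \proj[x]$ and any $y\in\cX$, $\langle x - x_+, y - x_+\rangle \le 0$), the $L$-Lipschitzness of $F$, and the polarization identity \eqref{eq:inner_product_representation}, which I will use in its three-point form $2\langle a-b,\,b-c\rangle = \|a-c\|^2 - \|a-b\|^2 - \|b-c\|^2$. The first step is to split the target inner product along the iterate $x^{k+1}$, writing $\tx^k - x^* = (\tx^k - x^{k+1}) + (x^{k+1} - x^*)$, so that
\[
2\gamma\langle F(\tx^k), \tx^k - x^*\rangle = 2\gamma\langle F(\tx^k), \tx^k - x^{k+1}\rangle + 2\gamma\langle F(\tx^k), x^{k+1} - x^*\rangle.
\]

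For the second summand I would invoke the projection property at $x^{k+1} = \proj[x^k - \gamma F(\tx^k)]$ with test point $y = x^*$, which rearranges to $\gamma\langle F(\tx^k), x^{k+1} - x^*\rangle \le \langle x^k - x^{k+1}, x^{k+1} - x^*\rangle$. Applying the three-point identity with $(a,b,c) = (x^k, x^{k+1}, x^*)$ then produces the telescoping distance terms $\|x^k - x^*\|^2 - \|x^k - x^{k+1}\|^2 - \|x^{k+1} - x^*\|^2$.

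The remaining piece, $2\gamma\langle F(\tx^k), \tx^k - x^{k+1}\rangle$, is where the mechanism of the lemma lives, and here I would split the operator value as $F(\tx^k) = \bigl(F(\tx^k) - F(\tx^{k-1})\bigr) + F(\tx^{k-1})$. The ``past'' part $2\gamma\langle F(\tx^{k-1}), \tx^k - x^{k+1}\rangle$ is handled by the projection property at $\tx^k = \proj[x^k - \gamma F(\tx^{k-1})]$ with test point $y = x^{k+1}$; the same three-point identity, now with $(a,b,c) = (x^k, \tx^k, x^{k+1})$, gives the bound $\|x^k - x^{k+1}\|^2 - \|x^k - \tx^k\|^2 - \|\tx^k - x^{k+1}\|^2$. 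The ``increment'' part $2\gamma\langle F(\tx^k) - F(\tx^{k-1}), \tx^k - x^{k+1}\rangle$ I would bound by Young's inequality, $2\gamma\langle u,v\rangle \le \gamma^2\|u\|^2 + \|v\|^2$, and then by $L$-Lipschitzness, yielding $\gamma^2 L^2\|\tx^k - \tx^{k-1}\|^2 + \|\tx^k - x^{k+1}\|^2$. Summing the three estimates, the two copies $\pm\|\tx^k - x^{k+1}\|^2$ cancel and the two copies $\pm\|x^k - x^{k+1}\|^2$ cancel, leaving precisely $\|x^k - x^*\|^2 - \|x^{k+1} - x^*\|^2 - \|\tx^k - x^k\|^2 + \gamma^2 L^2\|\tx^k - \tx^{k-1}\|^2$.

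The main obstacle is not any individual estimate but arranging the decompositions so that every cross term cancels. In particular, the split $F(\tx^k) = (F(\tx^k)-F(\tx^{k-1})) + F(\tx^{k-1})$ is exactly what makes the second (past) projection inequality applicable and simultaneously routes the Lipschitz bound onto the controllable quantity $\|\tx^k - \tx^{k-1}\|^2$ rather than onto an undesired term; and the Young coefficient must be chosen so that the surplus $\|\tx^k - x^{k+1}\|^2$ it creates is matched exactly by the negative copy arising from the past-projection estimate. Getting these bookkeeping choices right is what delivers the clean inequality with no residual cross terms.
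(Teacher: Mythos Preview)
Your argument is correct: the two projection inequalities (at $x^{k+1}$ with test point $x^*$, and at $\tx^k$ with test point $x^{k+1}$), the split $F(\tx^k)=(F(\tx^k)-F(\tx^{k-1}))+F(\tx^{k-1})$, Young's inequality, and Lipschitzness combine exactly as you describe, and all cross terms cancel to give \eqref{eq:lemma_5_Gauthier}. The paper itself does not supply a proof of this lemma---it is quoted verbatim from \citet{gidel2019variational}---and your derivation is precisely the standard projection/three-point argument used there, so there is nothing to contrast.
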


\begin{lemma}[Lemma 6 from \citet{gidel2019variational}]\label{lem:lemma_6_Gauthier}
    Let operator $F$ be $L$-Lipschitz. Then for all $k \geq 2$ the iterates of \ref{eq:Proj_PEG} with $\gamma > 0$ satisfy
    \begin{equation}
         \|\tx^k - \tx^{k-1}\|^2 \leq 4\|\tx^k - x^k\|^2 + 4\gamma^2L^2\|\tx^{k-1} - \tx^{k-2}\|^2 - \|\tx^k - \tx^{k-1}\|^2. \label{eq:lemma_6_Gauthier}
    \end{equation}
\end{lemma}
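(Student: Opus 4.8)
The goal is to prove inequality~\eqref{eq:lemma_6_Gauthier}, that is, for the iterates of~\ref{eq:Proj_PEG} with $\gamma>0$ and $F$ being $L$-Lipschitz,
\begin{equation*}
    \|\tx^k - \tx^{k-1}\|^2 \leq 4\|\tx^k - x^k\|^2 + 4\gamma^2L^2\|\tx^{k-1} - \tx^{k-2}\|^2 - \|\tx^k - \tx^{k-1}\|^2 \qquad (k\geq 2).
\end{equation*}
The starting point is the defining recursion of~\ref{eq:Proj_PEG}. For any $k\geq 1$, the extrapolation step reads $\tx^k = \proj[x^k - \gamma F(\tx^{k-1})]$ and the update step reads $x^{k} = \proj[x^{k-1} - \gamma F(\tx^{k-1})]$; comparing these two with the shifted extrapolation at index $k-1$, I would aim to express $\tx^k - \tx^{k-1}$ in terms of the projection displacement $\tx^k - x^k$ and of the operator difference $\gamma(F(\tx^{k-1}) - F(\tx^{k-2}))$.

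\textbf{Key steps.} First I would use the firm nonexpansiveness (or nonexpansiveness) of the Euclidean projection $\proj$ onto the convex set $\cX$: since $\tx^k = \proj[x^k - \gamma F(\tx^{k-1})]$ and $x^k = \proj[x^{k-1} - \gamma F(\tx^{k-1})]$, and also comparing to the point defining $\tx^{k-1} = \proj[x^{k-1} - \gamma F(\tx^{k-2})]$, I would bound the relevant displacement. The clean route is to write the triangle-type decomposition
\begin{equation*}
    \tx^k - \tx^{k-1} = (\tx^k - x^k) + (x^k - \tx^{k-1}),
\end{equation*}
control the first summand by $\|\tx^k - x^k\|$ directly, and control the second summand via nonexpansiveness of the projection applied to the defining arguments of $x^k$ and $\tx^{k-1}$, namely $x^k = \proj[x^{k-1} - \gamma F(\tx^{k-1})]$ and $\tx^{k-1} = \proj[x^{k-1}-\gamma F(\tx^{k-2})]$, which share the base point $x^{k-1}$. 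Nonexpansiveness then gives $\|x^k - \tx^{k-1}\| \leq \gamma\|F(\tx^{k-1}) - F(\tx^{k-2})\| \leq \gamma L\|\tx^{k-1} - \tx^{k-2}\|$, where the last inequality is $L$-Lipschitzness. At this point I would combine the two bounds and square, using the elementary inequality $\|a+b\|^2 \leq 2\|a\|^2 + 2\|b\|^2$ (a special case of~\eqref{eq:a_plus_b} with $\alpha=1$), yielding
\begin{equation*}
    \|\tx^k - \tx^{k-1}\|^2 \leq 2\|\tx^k - x^k\|^2 + 2\gamma^2 L^2\|\tx^{k-1}-\tx^{k-2}\|^2.
\end{equation*}
Finally, I would reconcile the constants with the target: the stated right-hand side is $4\|\tx^k - x^k\|^2 + 4\gamma^2 L^2\|\tx^{k-1} - \tx^{k-2}\|^2 - \|\tx^k - \tx^{k-1}\|^2$, so moving the $-\|\tx^k - \tx^{k-1}\|^2$ term to the left gives $2\|\tx^k - \tx^{k-1}\|^2$ on the left, and the bound I derived (after multiplying by $2$) matches term by term.

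\textbf{Main obstacle.} The crux is identifying the correct pair of projection arguments that share a common base point so that nonexpansiveness of $\proj$ yields exactly the operator difference $\gamma(F(\tx^{k-1})-F(\tx^{k-2}))$; the subtlety is that $\tx^k$ and $x^k$ have different base points ($x^k$ versus $x^{k-1}$), so one must route through $x^k - \tx^{k-1}$ rather than $\tx^k - \tx^{k-1}$ directly. A secondary point to verify is that the index bookkeeping respects the requirement $k\geq 2$ (so that $\tx^{k-2}$ is well defined and the recursion for $x^k$ and $\tx^{k-1}$ both hold), and that the constant-matching leaves no slack inconsistency. I would double-check the final algebra showing that the extra $-\|\tx^k-\tx^{k-1}\|^2$ on the right, once absorbed, leaves precisely the factor-$2$ bound I obtained, so the inequality is tight with the claimed constants $4$ and $4\gamma^2L^2$.
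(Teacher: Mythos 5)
Your proof is correct: the decomposition $\tx^k-\tx^{k-1}=(\tx^k-x^k)+(x^k-\tx^{k-1})$, nonexpansiveness of $\proj$ applied to the common-base-point pair $x^k=\proj[x^{k-1}-\gamma F(\tx^{k-1})]$ and $\tx^{k-1}=\proj[x^{k-1}-\gamma F(\tx^{k-2})]$ (both valid for $k\geq 2$), $L$-Lipschitzness, and \eqref{eq:a_plus_b} with $\alpha=1$ yield $\|\tx^k-\tx^{k-1}\|^2\leq 2\|\tx^k-x^k\|^2+2\gamma^2L^2\|\tx^{k-1}-\tx^{k-2}\|^2$, which after multiplying by $2$ and moving one copy of $\|\tx^k-\tx^{k-1}\|^2$ to the right is exactly \eqref{eq:lemma_6_Gauthier}. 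This is essentially the same argument as the original proof of Lemma~6 in \citet{gidel2019variational}, which the paper imports by citation without reproducing it.
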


\newpage

\section{Proof of Theorem~\ref{thm:unconstr}}
\label{app:proof_uncons}
Let us first recall Theorem~\ref{thm:unconstr}.

\begin{reptheorem}{thm:unconstr}
    \ThmUncon
\end{reptheorem}

\begin{proof}
    We start with the upper bound for $\|x^{k+1} - x^*\|^2$:
    \begin{eqnarray*}
        \|x^{k+1} - x^*\|^2 &=& \|x^k - x^* - \gamma F(\tx^k)\|^2\\
        &=& \|x^k - x^*\|^2 -2\gamma\langle x^k - x^*, F(\tx^k) \rangle + \gamma^2 \|F(\tx^k)\|^2\\
        &=& \|x^k - x^*\|^2 -2\gamma\langle \tx^k - x^*, F(\tx^k) \rangle -2\gamma\langle x^k - \tx^k, F(\tx^k) \rangle + \gamma^2 \|F(\tx^k)\|^2.
    \end{eqnarray*}
    Since $F$ is monotone, we have $\langle \tx^k - x^*, F(\tx^k) \rangle \geq 0$. Moreover, the update rule for \ref{eq:PEG} implies $\langle x^k - \tx^k, F(\tx^k) \rangle = \gamma\langle F(\tx^{k-1}), F(\tx^k) \rangle$. Using these relations, we continue our derivation as
    \begin{eqnarray*}
        \|x^{k+1} - x^*\|^2 &\leq& \|x^k - x^*\|^2 -2\gamma^2\langle F(\tx^{k-1}), F(\tx^k) \rangle + \gamma^2 \|F(\tx^k)\|^2\\
        &=& \|x^k - x^*\|^2 + \gamma^2 \|F(\tx^{k-1}) - F(\tx^k)\|^2 - \gamma^2 \|F(\tx^{k-1})\|^2.
    \end{eqnarray*}
    Next, we sum up the above inequality with $(\nicefrac{(k+33)\gamma^2}{3})$-multiple of \eqref{eq:key_inequality} and use the definition of $\Phi_k$ from \eqref{eq:potential}:
    \begin{eqnarray*}
        \Phi_{k+1} &\leq& \|x^k - x^*\|^2 + \gamma^2 \|F(\tx^{k-1}) - F(\tx^k)\|^2 - \gamma^2 \|F(\tx^{k-1})\|^2\\
        &&\quad + \frac{(k+33)\gamma^2}{3}\left(\|F(x^k)\|^2 + 2\|F(x^k) - F(\tx^{k-1})\|^2 \right)\\
        &&\quad + (k+33)\gamma^2\left(L^2\gamma^2 - \frac{2}{9}\right)\|F(\tx^k) - F(\tx^{k-1})\|^2\\
        &\overset{\gamma \leq \nicefrac{1}{3L}}{\leq}& \Phi_k + \frac{\gamma^2}{3}\left(\|F(x^k)\|^2 + 2\|F(x^k) - F(\tx^{k-1})\|^2 \right) - \gamma^2 \|F(\tx^{k-1})\|^2\\
        &&\quad + \gamma^2\left(1 - \frac{k+33}{9}\right)\|F(\tx^k) - F(\tx^{k-1})\|^2.
    \end{eqnarray*}
    Applying $\|F(x^k)\|^2 \leq 2\|F(\tx^{k-1})\|^2 + 2\|F(x^k) - F(\tx^{k-1})\|^2$ and then $\|F(x^k) - F(\tx^{k-1})\|^2 \leq 2\|F(x^k) - F(\tx^k)\|^2 + 2\|F(\tx^k) - F(\tx^{k-1})\|^2 \overset{\eqref{eq:Lipschitzness}}{\leq} 2L^2\|x^k - \tx^k\|^2 + 2\|F(\tx^k) - F(\tx^{k-1})\|^2 = 2L^2\gamma^2\|F(\tx^{k-1})\|^2 + 2\|F(\tx^k) - F(\tx^{k-1})\|^2 \leq \tfrac{2}{9}\|F(\tx^{k-1})\|^2 + 2\|F(\tx^k) - F(\tx^{k-1})\|^2$, we derive
    \begin{eqnarray*}
        \Phi_{k+1} &\leq& \Phi_k + \frac{8}{3}\gamma^2\|F(x^k)-F(\tx^{k-1})\|^2 - \frac{\gamma^2}{3}\|F(\tx^{k-1})\|^2\\
        &&\quad + \gamma^2\left(1 - \frac{k+33}{9}\right)\|F(\tx^k) - F(\tx^{k-1})\|^2\\
        &\leq& \Phi_k + \gamma^2\left(\frac{8}{27} - \frac{1}{3}\right)\|F(\tx^{k-1})\|^2 + \gamma^2\left(\frac{11}{3} - \frac{k+33}{9}\right)\|F(\tx^k) - F(\tx^{k-1})\|^2\\
        &\leq& \Phi_k,
    \end{eqnarray*}
    where in the last step we use $k \geq 0$. In other words, we proved that $\Phi_k$ defined in \eqref{eq:potential} is a potential for \eqref{eq:PEG}. In particular, $\Phi_k \leq \Phi_{k-1} \leq \ldots \leq \Phi_0 = \|x^0 - x^*\|^2 + 32\gamma^2\|F(x^0)\|^2 \overset{\eqref{eq:Lipschitzness}}{\leq} (1 + 32L^2\gamma^2)\|x^0 - x^*\|^2$ (here we use our convention: $F(\tx^{-1}) = 0$) and all terms in $\Phi_k$ are non-negative. These facts imply that for all $k\geq 0$
    \begin{eqnarray}
        \|F(x^k)\|^2 &\leq& \frac{3}{\gamma^2(k+32)}\Phi_k \leq  \frac{3(1+32L^2\gamma^2)\|x^0 - x^*\|^2}{\gamma^2(k+32)}, \label{eq:technical_2}\\
        \|x^k - x^*\|^2 &\leq& \Phi_k \leq (1+32L^2\gamma^2)\|x^0 - x^*\|^2. \label{eq:technical_3}
    \end{eqnarray}
    That is, we obtained the first part of \eqref{eq:main_result}. The second part of \eqref{eq:main_result} follows from the monotonicity of $F$ and the above inequalities:
    \begin{eqnarray*}
        \texttt{Gap}_F(x^k) &=& \max\limits_{y\in\R^d: \|y - x^*\| \leq \tfrac{\sqrt{41}}{3}\|x^0 - x^*\|}\langle F(y), x^k - y \rangle\\
        &\overset{\eqref{eq:Lipschitzness}}{\leq}& \max\limits_{y\in\R^d: \|y - x^*\| \leq \tfrac{\sqrt{41}}{3}\|x^0 - x^*\|}\langle F(x^k), x^k - y \rangle\\
        &\leq& \|F(x^k)\|\cdot \max\limits_{y\in\R^d: \|y - x^*\| \leq \tfrac{\sqrt{41}}{3}\|x^0 - x^*\|}\|x^k - y\|\\
        &\leq& \|F(x^k)\|\left(\|x^k - x^*\| + \tfrac{\sqrt{41}}{3}\|x^0 - x^*\|\right)\\
        &\overset{\eqref{eq:technical_2},\eqref{eq:technical_3}}{\leq}& \frac{2\sqrt{41}(1+32L^2\gamma^2)\|x^0 - x^*\|^2}{\gamma\sqrt{3k+96}}.
    \end{eqnarray*}
\end{proof}

\newpage

\section{Proof of Theorem~\ref{thm:const}}\label{app:thm_cons}
Let us start with an important lemma.

\begin{replemma}{lem:key_lemma_constrained}
    \LemmaConst
\end{replemma}
\begin{proof}
    Since $F$ is monotone and $L$-Lipschitz, we have
    \begin{eqnarray}
        0 &\leq& \langle F(x^{k+1}) - F(x^k), x^{k+1} - x^k\rangle, \label{eq:ineq_1_constr}\\
        \|F(x^{k+1}) - F(\tx^k)\|^2 &\leq& L^2 \|x^{k+1} - \tx^k\|^2. \label{eq:ineq_2_constr}
    \end{eqnarray}
    Taking into account relations $x^{k+1} = \proj[x^k - \gamma F(\tx^k)], x^k = \proj[x^{k-1} - \gamma F(\tx^{k-1})], \tx^k = \proj[x^{k} - \gamma F(\tx^{k-1})]$ and properties of the projection on a convex closed set, we also obtain
    \begin{eqnarray}
        \langle x^k - \gamma F(\tx^k) - x^{k+1}, x^k - x^{k+1}\rangle &\leq& 0, \label{eq:ineq_3_constr}\\
        \langle x^{k-1} - \gamma F(\tx^{k-1}) - x^{k}, x^{k+1} - x^{k}\rangle &\leq& 0, \label{eq:ineq_4_constr}\\
        \langle x^{k-1} - \gamma F(\tx^{k-1}) - x^{k}, \tx^{k} - x^{k}\rangle &\leq& 0, \label{eq:ineq_5_constr}\\
        \langle x^k - \gamma F(\tx^{k-1}) - \tx^k, x^{k+1} - \tx^{k}\rangle &\leq& 0. \label{eq:ineq_6_constr}
    \end{eqnarray}
    Summing up inequalities \eqref{eq:ineq_1_constr}-\eqref{eq:ineq_6_constr} with weights $4\gamma, 5\gamma^2, 4, 2, 2, 2$ respectively, we get
    \begin{eqnarray*}
        5\gamma^2\|F(x^{k+1}) - F(\tx^k)\|^2 &\leq& 5 \gamma^2 L^2 \|x^{k+1} - \tx^k\|^2  + 4\gamma \langle F(x^{k+1}) - F(x^k), x^{k+1} - x^k\rangle\\
        &&\quad - 4 \langle x^k - \gamma F(\tx^k) - x^{k+1}, x^k - x^{k+1}\rangle\\
        &&\quad - 2 \langle x^{k-1} - \gamma F(\tx^{k-1}) - x^{k}, x^{k+1} - x^{k}\rangle\\
        &&\quad - 2 \langle x^{k-1} - \gamma F(\tx^{k-1}) - x^{k}, \tx^{k} - x^{k}\rangle\\
        &&\quad - 2 \langle x^k - \gamma F(\tx^{k-1}) - \tx^k, x^{k+1} - \tx^{k}\rangle\\
        &=& 5 \gamma^2 L^2 \|x^{k+1} - \tx^k\|^2  + 4\gamma \langle F(x^{k+1}) - F(x^k), x^{k+1} - x^k\rangle\\
        &&\quad - 4\|x^{k+1} - x^k\|^2 + 4\gamma \langle F(\tx^k), x^k - x^{k+1} \rangle\\
        &&\quad + 4 \gamma \langle F(\tx^{k-1}), x^{k+1} - x^k \rangle - 2\langle x^{k-1} - x^k, x^{k+1} - x^k \rangle\\
        &&\quad - 2 \langle x^{k-1} - x^k, \tx^k - x^k \rangle - 2 \langle x^k - \tx^k, x^{k+1} - \tx^k \rangle\\
        &=& 5 \gamma^2 L^2 \|x^{k+1} - \tx^k\|^2  + 4\gamma \langle F(x^{k+1}) - F(\tx^k), x^{k+1} - x^k\rangle\\
        &&\quad - 4\|x^{k+1} - x^k\|^2 - 4\gamma \langle F(x^k) - F(\tx^{k-1}), x^{k+1} - x^k \rangle \\
        &&\quad + \|x^{k-1} - x^{k}\|^2 + \|x^{k+1} - x^k\|^2 - \|x^{k+1} + x^{k-1} - 2x^k\|^2\\
        &&\quad + \|x^{k-1} - \tx^k\|^2 - \|x^{k-1} - x^k\|^2 - \|\tx^k - x^k\|^2\\
        &&\quad + \|x^k - x^{k+1}\|^2 - \|x^k - \tx^k\|^2 - \|x^{k+1} - \tx^k\|^2.
    \end{eqnarray*}
    Next, we rearrange the terms and use $\Psi_k = \|x^{k} - x^{k-1}\|^2 + \|x^{k} - x^{k-1} - 2\gamma(F(x^{k}) - F(\tx^{k-1}))\|^2 = 2\|x^k - x^{k-1}\|^2 -4\gamma \langle x^k - x^{k-1}, F(x^{k}) - F(\tx^{k-1}) \rangle + 4\gamma^2\|F(x^k) - F(\tx^{k-1})\|^2$:
    \begin{eqnarray*}
        \Psi_{k+1} &\leq& - (1 - 5\gamma^2 L^2)\|x^{k+1} - \tx^k\|^2 - 4\gamma \langle F(x^k) - F(\tx^{k-1}), x^{k+1} - x^k \rangle\\
        &&\quad - \|x^{k+1} + x^{k-1} - 2x^k\|^2 + \|x^{k-1} - \tx^k\|^2 - 2\|\tx^k - x^k\|^2\\
        &&\quad - \gamma^2 \|F(x^{k+1}) - F(\tx^k)\|^2\\
        &=& \Psi_k - 2\|x^k - x^{k-1}\|^2 - 2 \langle 2\gamma(F(x^k) - F(\tx^{k-1})), x^{k+1} + x^{k-1} - 2x^k \rangle\\
        &&\quad - 4\gamma^2\|F(x^k) - F(\tx^{k-1})\|^2 - (1 - 5\gamma^2 L^2)\|x^{k+1} - \tx^k\|^2\\
        &&\quad - \|x^{k+1} + x^{k-1} - 2x^k\|^2 + \|x^{k-1} - \tx^k\|^2 - 2\|\tx^k - x^k\|^2\\
        &&\quad - \gamma^2 \|F(x^{k+1}) - F(\tx^k)\|^2.
    \end{eqnarray*}
    Using standard inequalities $2\langle a, b \rangle \leq \|a\|^2 + \|b\|^2$, $\|a+ b \|^2 \leq 2\|a\|^2 + 2\|b\|^2$, $a,b \in \R^d$, we derive
    \begin{eqnarray*}
        \Psi_{k+1} &\leq& \Psi_k - 2\|x^k - x^{k-1}\|^2 + 4\gamma^2 \|F(x^k) - F(\tx^{k-1})\|^2 + \|x^{k+1} + x^{k-1} - 2x^k\|^2\\
        &&\quad - 4\gamma^2\|F(x^k) - F(\tx^{k-1})\|^2 - (1 - 5\gamma^2 L^2)\|x^{k+1} - \tx^k\|^2\\
        &&\quad - \|x^{k+1} + x^{k-1} - 2x^k\|^2 + 2\|x^{k-1} - x^k\|^2 + 2\|x^k - \tx^k\|^2 - 2\|\tx^k - x^k\|^2\\
        &&\quad - \gamma^2 \|F(x^{k+1}) - F(\tx^k)\|^2\\
        &=& \Psi_k - (1 - 5\gamma^2 L^2)\|x^{k+1} - \tx^k\|^2 - \gamma^2 \|F(x^{k+1}) - F(\tx^k)\|^2,
    \end{eqnarray*}
    which finishes the proof.
\end{proof}

We can now prove the main theorem.

\begin{reptheorem}{thm:const}
    \ThmConst
\end{reptheorem}

\begin{proof}
    Lemma~\ref{lem:lemma_5_Gauthier} implies
    \begin{eqnarray*}
        0 &\leq& 2\gamma\langle F(x^*), \tx^k - x^* \rangle \leq 2\gamma\langle F(\tx^k), \tx^k - x^* \rangle\\
        &\leq& \|x^k - x^*\|^2 - \|x^{k+1} - x^*\|^2 - \|\tx^k - x^k\|^2 + \gamma^2 L^2 \|\tx^k - \tx^{k-1}\|^2.
    \end{eqnarray*}
    Together with Lemma~\ref{lem:lemma_6_Gauthier} it gives
    \begin{eqnarray*}
        \|x^{k+1} - x^*\|^2 + \frac{1}{16}\|\tx^{k} - \tx^{k-1}\|^2 &\leq& \|x^k - x^*\|^2 - \|\tx^k - x^k\|^2 + \gamma^2 L^2 \|\tx^k - \tx^{k-1}\|^2\\
        &&\quad + \frac{1}{16}\|\tx^{k-1} - \tx^{k-2}\|^2 - \frac{1}{16}\|\tx^{k-1} - \tx^{k-2}\|^2\\
        &&\quad + \frac{1}{4}\|\tx^k - x^k\|^2 + \frac{\gamma^2L^2}{4}\|\tx^{k-1} - \tx^{k-2}\|^2\\
        &&\quad - \frac{1}{16}\|\tx^k - \tx^{k-1}\|^2\\
        &=& \|x^k - x^*\|^2 + \frac{1}{16}\|\tx^{k-1} - \tx^{k-2}\|^2 - \frac{3}{4}\|\tx^k - x^k\|^2\\
        &&\quad - \frac{1 - 16\gamma^2L^2}{16}\|\tx^k - \tx^{k-1}\|^2\\
        &&\quad - \frac{1 - 4\gamma^2 L^2}{16}\|\tx^{k-1} - \tx^{k-2}\|^2\\
        &\leq& \|x^k - x^*\|^2 + \frac{1}{16}\|\tx^{k-1} - \tx^{k-2}\|^2 - \frac{3}{4}\|\tx^k - x^k\|^2,
    \end{eqnarray*}
    where in the last inequality we apply $\gamma \leq \nicefrac{1}{4L}$. Combining the above inequality with \eqref{eq:key_inequality_constrained}, we derive
    \begin{eqnarray*}
        \Phi_{k+1} &\leq& \Phi_k + \frac{3}{24}\left(\|x^{k+1} - x^{k}\|^2 + \|x^{k+1} - x^{k} - 2\gamma(F(x^{k+1}) - F(\tx^{k}))\|^2\right)\\
        &&\quad - \frac{3}{4}\|\tx^k - x^k\|^2 - \frac{3k + 32}{24}\left((1 - 5\gamma^2 L^2)\|x^{k+1} - \tx^k\|^2 + \gamma^2 \|F(x^{k+1}) - F(\tx^k)\|^2\right)\\
        &\leq& \Phi_k + \frac{1}{8}\left(3\|x^{k+1} - x^{k}\|^2 + 8\gamma^2\|F(x^{k+1}) - F(\tx^{k})\|^2\right)\\
        &&\quad - \frac{3}{4}\|\tx^k - x^k\|^2 - \frac{4}{3}\left(\frac{9}{16}\|x^{k+1} - \tx^k\|^2 + \gamma^2 \|F(x^{k+1}) - F(\tx^k)\|^2\right)\\
        &\leq& \Phi_k + \underbrace{\left(\frac{6}{8} - \frac{4}{3}\cdot \frac{9}{16}\right)}_{0}\|x^{k+1} - \tx^k\|^2  + \underbrace{\left(\frac{6}{8} - \frac{3}{4}\right)}_{0}\|\tx^k - x^k\|^2\\
        &&\quad - \frac{\gamma^2}{3}\|F(x^{k+1}) - F(\tx^k)\|^2\\
        &\leq& \Phi_k.
    \end{eqnarray*}
    In other words, we proved that $\Phi_k$ defined in \eqref{eq:potential_constrained} is a potential for \eqref{eq:Proj_PEG}. To prove the bounds from \eqref{eq:main_result_constrained} using this potential, we need to derive several technical inequalities. Due to the non-expansiveness of the projection operator and Assumption~\ref{assump:mon_lip}, we have
    \begin{eqnarray}
        \|x^1 - x^0\|^2 &=& \|\proj[x^0 -\gamma F(x^0)] - \proj[x^0]\|^2\notag\\
        &\leq& \gamma^2\|F(x^0)\|^2, \label{eq:t1}\\
        \|x^1 - x^0 - 2\gamma(F(x^1) - F(\tx^0))\|^2 &=& \|x^1 - x^0\|^2 - 4\gamma \langle x^1 - x^0, F(x^1) - F(x^0) \rangle\notag\\
        &&\quad+ 4\gamma^2 \|F(x^1) - F(x^0)\|^2\notag\\
        &\overset{\eqref{eq:Lipschitzness}}{\leq}& \|x^1 - x^0\|^2  + 4\gamma^2 L^2 \|x^1 - x^0\|^2\notag\\
        &\overset{\eqref{eq:t1}}{\leq}& (1+4\gamma^2 L^2)\gamma^2 \|F(x^0)\|^2, \label{eq:t2}\\
        \Psi_2 &\overset{\eqref{eq:key_inequality_constrained}}{\leq}& \Psi_1 \overset{\eqref{eq:t1}, \eqref{eq:t2}}{\leq} 2(1+2\gamma^2 L^2)\gamma^2 \|F(x^0)\|^2. \label{eq:t3}
    \end{eqnarray}
    Next, using similar reasoning, we derive
    \begin{eqnarray}
        \|\tx^1 - \tx^0\|^2 &=&  \|\proj[x^1 - \gamma F(x^0)] - \proj[x^0]\|^2 \leq \|x^1 - \gamma F(x^0) - x^0\|^2 \notag\\
        &\overset{\eqref{eq:a_plus_b}}{\leq}& 2\|x^1 - x^0\|^2 + 2\gamma^2 \|F(x^0)\|^2 \overset{\eqref{eq:t1}}{\leq} 4\gamma^2 \|F(x^0)\|^2, \label{eq:t4}\\
        \|x^1 - x^*\|^2 &=& \|\proj[x^0 - \gamma F(x^0)] - \proj[x^* - \gamma F(x^*)]\|^2 \notag\\
        &\leq& \|x^0 - \gamma F(x^0) - x^* + \gamma F(x^*)\|^2 \notag\\
        &=& \|x^0 - x^*\|^2 -2\gamma \langle x^0 - x^*, F(x^0) - F(x^*) \rangle + \gamma^2 \|F(x^0) - F(x^*)\|^2 \notag\\
        &\overset{\eqref{eq:Lipschitzness}}{\leq}& (1+\gamma^2 L^2)\|x^0 - x^*\|^2, \label{eq:t5}\\
        \|x^2 - x^*\|^2 &=& \|\proj[x^1 - \gamma F(\tx^1)] - \proj[x^* - \gamma F(x^*)]\|^2 \notag\\
        &\leq& \|x^1 - \gamma F(\tx^1) - x^* + \gamma F(x^*)\|^2 \overset{\eqref{eq:a_plus_b}}{\leq} 2\|x^1 - x^*\|^2 + 2\gamma^2\|F(\tx^1) - F(x^*)\|^2  \notag\\
        &\overset{\eqref{eq:Lipschitzness}}{\leq}& 2\|x^1 - x^*\|^2 + 2\gamma^2 L^2\|\tx^1 - x^*\|^2\notag\\
        &=& 2\|x^1 - x^*\|^2 + 2\gamma^2 L^2\|\proj[x^1 - \gamma F(x^0)] - \proj[x^* - \gamma F(x^*)]\|^2 \notag\\
        &\leq& 2\|x^1 - x^*\|^2 + 2\gamma^2 L^2\|x^1 - \gamma F(x^0) - x^* - \gamma F(x^*)\|^2 \notag\\
        &\overset{\eqref{eq:a_plus_b}}{\leq}& 2\|x^1 - x^*\|^2 + 4\gamma^2 L^2 \|x^1 - x^*\|^2 + 4\gamma^4 L^2 \|F(x^0) - F(x^*)\|^2 \notag\\
        &\overset{\eqref{eq:t5},\eqref{eq:Lipschitzness}}{\leq}& 2(1 + 3\gamma^2L^2 + 4\gamma^4 L^4)\|x^0 - x^*\|^2. \label{eq:t6}
    \end{eqnarray}
    Therefore, for all $k \geq 2$ we have
    \begin{eqnarray}
        \frac{3k + 32}{24}\|x^k - x^{k-1}\|^2 &\leq& \frac{3k + 32}{24}\Psi_k \leq \Phi_k \leq \Phi_2 \notag\\
        &=& \|x^2 - x^*\|^2 + \frac{1}{16}\|\tx^1 - \tx^{0}\|^2  + \frac{19}{12}\Psi_2 \notag\\
        &\overset{\eqref{eq:t3}, \eqref{eq:t4}, \eqref{eq:t6}}{\leq}& 2(1 + 3\gamma^2L^2 + 4\gamma^4 L^4)\|x^0 - x^*\|^2 \notag\\
        &&\quad + \left(\frac{41}{12} + \frac{19}{3}\gamma^2 L^2\right)\gamma^2\|F(x^0)\|^2 \notag\\
        &\eqdef& H_{0,\gamma}^2. \label{eq:useful_ineq_constrained}
    \end{eqnarray}
    This implies the first part of \eqref{eq:main_result_constrained}. Moreover, using \eqref{eq:useful_ineq_constrained}, we get $\|x^k - x^*\|^2 \leq \Phi_k \leq \Phi_2 \leq H_{0,\gamma}^2$ for all $k > 0$. Next, one can rewrite $\Psi_k$ as
    \begin{eqnarray*}
        \Psi_k &=& \|x^{k} - x^{k-1}\|^2 + \|x^{k} - x^{k-1} - 2\gamma(F(x^{k}) - F(\tx^{k-1}))\|^2\\
        &=& 2\|x^{k} - x^{k-1} - \gamma (F(x^k) - F(\tx^{k-1}))\|^2 + 2\gamma^2\|F(x^k) - F(\tx^{k-1})\|^2
    \end{eqnarray*}
    that gives
    \begin{eqnarray}
        \|x^{k} - x^{k-1} - \gamma (F(x^k) - F(\tx^{k-1}))\|^2 \leq \frac{\Psi_k}{2} \overset{\eqref{eq:useful_ineq_constrained}}{\leq} \frac{12 H_{0,\gamma}^2}{3k+32}. \label{eq:useful_ineq_constrained_2}
    \end{eqnarray}
    Finally, for all $y \in \cX$ we have
    \begin{equation}
        0 \leq \langle x^{k-1} - \gamma F(\tx^{k-1}) - x^k,  x^k - y \rangle, \label{eq:useful_ineq_constrained_3}
    \end{equation}
    since $x^k = \proj[x^{k-1} - \gamma F(\tx^{k-1})]$. Putting all together, we derive
    \begin{eqnarray*}
        \texttt{Gap}_F(x^k) &=& \max\limits_{y\in \cX: \|y - x^*\| \leq H_{0}}\langle F(y), x^k - y \rangle\\
        &\overset{\eqref{eq:Lipschitzness}}{\leq}&  \max\limits_{y\in \cX: \|y - x^*\| \leq H_{0}} \langle F(x^k), x^k - y \rangle\\
        &\overset{\eqref{eq:useful_ineq_constrained_3}}{\leq}& \frac{1}{\gamma}\max\limits_{y\in \cX: \|y - x^*\| \leq H_{0}} \langle x^{k-1} - x^k - \gamma( F(\tx^{k-1}) - F(x^k)), x^k - y \rangle\\
        &\leq& \frac{1}{\gamma} \|x^{k} - x^{k-1} - \gamma (F(x^k) - F(\tx^{k-1}))\| \max\limits_{y\in \cX: \|y - x^*\| \leq H_{0}} \|x^k - y\|\\
        &\overset{\eqref{eq:useful_ineq_constrained_2}}{\leq}& \frac{8\sqrt{3}H_{0,\gamma}\cdot H_0}{\gamma\sqrt{3k+32}}.
    \end{eqnarray*}
    This concludes the proof.
\end{proof}

\newpage

\section{Further Numerical Experiments}\label{app:extra_exp}
In this section, we provide the base numerical results that grounded the results from Theorem~\ref{thm:const}. We also provide numerical experiments suggesting the same behavior for Optimistic Gradient (\algname{OG}) in the constrained case (as provided by, e.g.,~\citep[Section 3]{hsieh2019convergence}) whose recursion is:
\begin{equation}
    \tx^{k}=\proj[x^k - \gamma F(\tx^{k-1})],\quad 
    x^{k+1}=\tx^k+\gamma (F(\tx^{k-1})-F(\tx^k))\tag{\algname{Proj-OG}}\,,\quad \text{for all} \; k>0\,,\label{eq:Proj_OG}
\end{equation}
with $\tx^0 = x^0 \in \cX$.
We report the worst-case evolution of the ratios $\nicefrac{\|x^N-x^{N-1} \|^2}{\|x^0-x^*\|^2}$ (for~\ref{eq:Proj_PEG}) and $\nicefrac{\|\tx^N-\tx^{N-1} \|^2}{\|x^0-x^*\|^2}$ (for~\ref{eq:Proj_OG}) on Fig.~\ref{fig:3}. Those values were computed using the performance estimation toolbox (PESTO)~\citep{taylor2017performance}.
\begin{figure*}[h!]
    \centering
    \includegraphics[width=.5\textwidth]{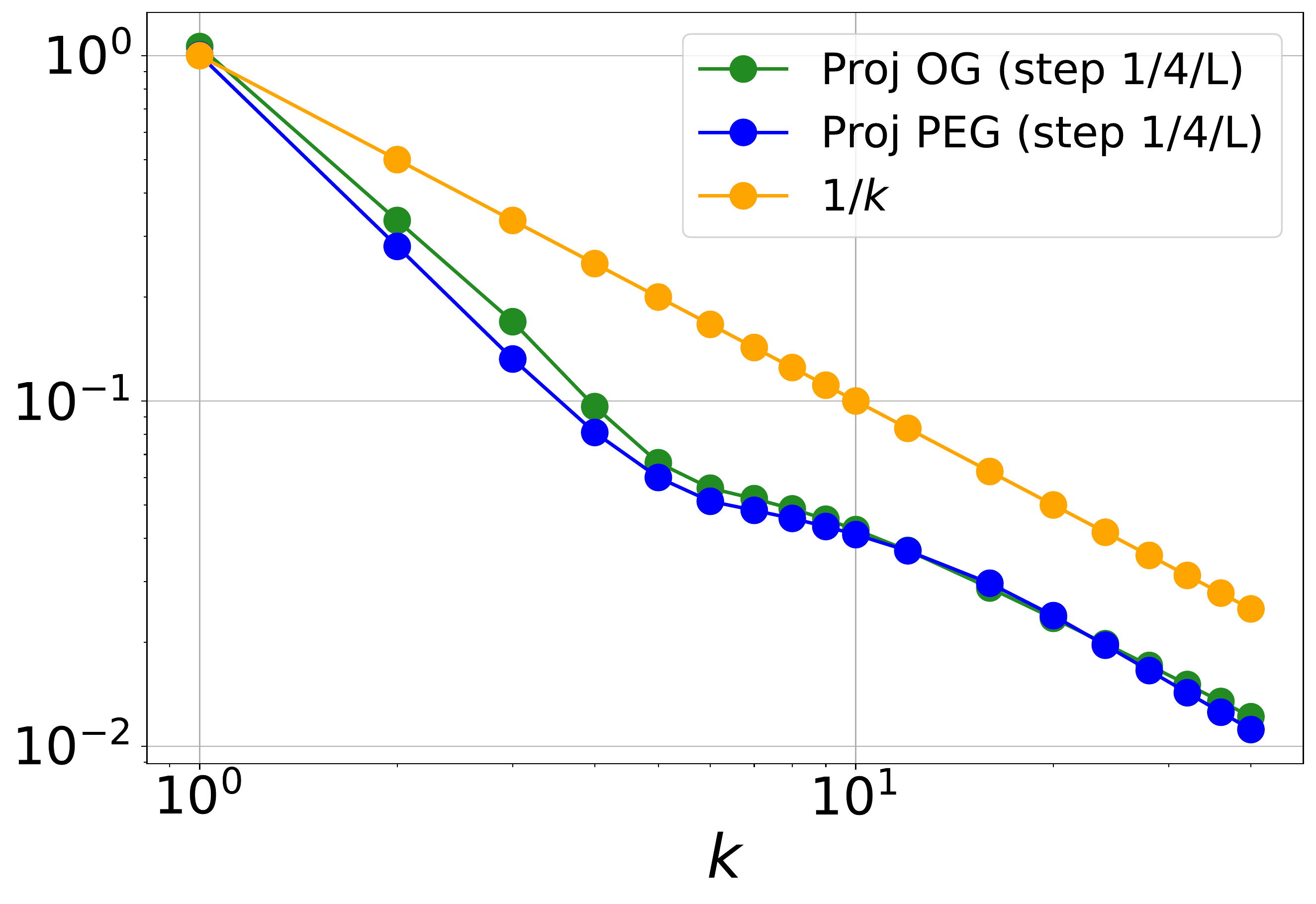}
    \caption{Worst-case ratios $\nicefrac{\|x^N-x^{N-1} \|^2}{\|x^0-x^*\|^2}$ (for~\ref{eq:Proj_PEG}) and $\nicefrac{\|\tx^N-\tx^{N-1} \|^2}{\|x^0-x^*\|^2}$ (for~\ref{eq:Proj_OG}) as functions of $N$, computed with PESTO~\citep{taylor2017performance} ($L=1$, $\gamma=\nicefrac{1}{4L}$).}
\label{fig:3}
\end{figure*}
\end{document}